\documentclass[11pt]{article}

\usepackage[T1]{fontenc}
\usepackage[utf8]{inputenc}
\usepackage{lmodern}
\usepackage{amsmath,amssymb,amsthm,mathtools}
\usepackage[a4paper,margin=30mm]{geometry}
\usepackage{microtype}
\usepackage{xcolor}
\usepackage[hidelinks]{hyperref}
\hypersetup{
  pdftitle={Reflexivity of Grothendieck Spaces and Strongly Normal Sequences},
  pdfauthor={Wieslaw Sliwa},
  pdfsubject={Banach spaces; Grothendieck spaces; strongly normal sequences},
  pdfkeywords={Grothendieck space, reflexivity, strongly normal sequence, separable quotient, L1 space}
}

\newtheorem{theorem}{Theorem}
\newtheorem{lemma}[theorem]{Lemma}
\newtheorem{corollary}[theorem]{Corollary}
\newtheorem{proposition}[theorem]{Proposition}
\newtheorem{remark}[theorem]{Remark}
\theoremstyle{definition}

\newcommand{\N}{\mathbb N}
\newcommand{\lin}{\operatorname{lin}}
\newcommand{\Step}[1]{\par\medskip\noindent\textit{Step #1}\ }

\title{Reflexivity of Grothendieck Spaces and Strongly Normal Sequences}
\author{Wies{\l}aw {\'S}liwa\\
\small Institute of Mathematics, Faculty of Exact and Technical Sciences, University of Rzesz\'ow\\
\small Pigonia 1, 35-310 Rzesz\'ow, Poland\\
\small \texttt{wsliwa@ur.edu.pl}}
\date{October 2026}

\begin{document}
\maketitle

\begin{abstract}
A normalized sequence $(y_n)$ in the dual $E^*$ of a Banach space $E$ is
called normal if it is weak-star null, and strongly normal if
\[
 G(y_n)=\left\{x\in E:\sum_{n=1}^{\infty}|y_n(x)|<\infty\right\}
\]
is dense in $E$.  We prove that, within the class of Grothendieck Banach
spaces, reflexivity is characterized by the subsequence behavior of normal
sequences: a Grothendieck space $E$ is reflexive if and only if every normal
sequence in $E^*$ contains a strongly normal subsequence.  If $E$ is
nonreflexive and Grothendieck, then $E^*$ contains a normalized weakly null
sequence none of whose subsequences is strongly normal; more strongly, it
contains a closed hereditarily $\ell_1$ subspace $Y$ with a normalized
weakly null Schauder basis such that no sequence in $S(Y)$ is strongly
normal.  We also obtain an exact quotient criterion valid for arbitrary
Banach spaces: a normal sequence has a strongly normal subsequence precisely
when, for some subsequence $z=(z_n)$, the quotient
$E/\bigcap_n\ker z_n$ is separable.  The proofs combine a
weak-star basic-sequence argument with results of Haydon,
Johnson--Maurey--Schechtman and Popov.  Concrete realizations are given in
$(L_\infty(\mu))^*$ for nonatomic probability spaces (and, more generally,
for $\sigma$-finite spaces with a nonatomic part) and in $\ell_\infty^*$.
\end{abstract}

\medskip
\noindent\textbf{2020 Mathematics Subject Classification.}
Primary 46B10, 46B26; Secondary 46B15, 46E30.

\noindent\textbf{Key words and phrases.}
Grothendieck space, reflexivity, normal sequence, strongly normal sequence,
weak-star basic sequence, separable quotient, $L_1$-space.

\section{Introduction}
Let $E$ be a real Banach space.  Following \cite{Sliwa1996,Sliwa2012}, a
sequence $(y_n)\subset S(E^*)$ is called \emph{normal} if
\[
 y_n\longrightarrow 0\quad\text{in }\sigma(E^*,E).
\]
It is called \emph{strongly normal} if
\[
 G(y_n):=\left\{x\in E:\sum_{n=1}^{\infty}|y_n(x)|<\infty\right\}
\]
is dense in $E$.  Every strongly normal sequence is normal.  Indeed, if
$x\in E$ and $\varepsilon>0$, choose $g\in G(y_n)$ with
$\|x-g\|<\varepsilon$.  Since $\sum_n |y_n(g)|<\infty$, one has
$y_n(g)\to0$, and, because $\|y_n\|=1$,
\[
 |y_n(x)|\le |y_n(x-g)|+|y_n(g)|
            \le \varepsilon+|y_n(g)|.
\]
Thus $y_n(x)\to0$ for every $x\in E$.

The following question was first posed by the author as Problem~7.11 in his
doctoral dissertation, written and defended in Polish at Adam Mickiewicz
University in 1996 \cite{Sliwa1996}:
\[
 \textit{Does every normal sequence in $E^*$ contain a strongly normal
 subsequence?}
\tag{P}
\]
The same dissertation already proved two facts that will be used below: every
strongly normal sequence has a weak-star basic subsequence
\cite[Proposition~7.7]{Sliwa1996}, and Problem~(P) has a positive answer for
weakly compactly generated (WCG) spaces \cite[Proposition~7.12]{Sliwa1996}.
Before the journal publication, W\'ojtowicz extended the WCG result in 2002
to spaces with the controlled separable projection property, in particular to
weakly compactly determined spaces \cite[Lemma~1]{Wojtowicz2002}.  Problem~(P)
and the original 1996 results were subsequently published by the author in
\cite[Theorem~1 and Proposition~4]{Sliwa2012}.  The problem was still listed
explicitly as an open problem in the 2018 survey \cite[Problem~7]{Ferrando2018}.
A related positive selection result appeared recently in
\cite{KakolLopezSliwa2026}: for infinite compact spaces $X$ and $Y$, the
authors exhibited a concrete normal sequence in $C(X\times Y)^*$ such that
every subsequence admits a further subsequence which is strongly normal and
weak-star basic.

In September 2026 K\k{a}kol posted a ZFC negative solution
\cite{Kakol2026}.  He constructed, for every set $\Gamma$ with
$|\Gamma|\ge\mathfrak c$, a normalized weakly null sequence in
$\ell_1(\Gamma)^*=\ell_\infty(\Gamma)$ having no strongly normal subsequence,
and also obtained a counterexample in a Banach sequence space not isomorphic
to any $\ell_1(\Gamma)$.  His method is combinatorial and, for the family
$\ell_1(\kappa)$, yields additional cardinal information involving the
bounding number $\mathfrak b$; in particular, the corresponding assertion for
$\kappa=\omega_1$ is independent of ZFC.

The present work gives a complementary structural result of a different
kind.  K\k{a}kol's theorem concerns specific classes and, in addition, detects
cardinal thresholds.  Our quantifier is instead structural: we show that the
obstruction occurs in the dual of \emph{every} nonreflexive Grothendieck
space and obtain a characterization of reflexivity inside the Grothendieck
class.  We then strengthen the conclusion beyond subsequences: the
obstruction persists for normalized block sequences of a suitable weakly
null basic sequence, and indeed inside an entire closed hereditarily
$\ell_1$ subspace of the dual there is no normalized strongly normal
sequence at all; see Theorem~\ref{thm:hereditary-no-strong}.  Thus the two
approaches have different scopes and neither subsumes the other.

The proof uses three ingredients.  First, every strongly normal sequence in
$E^*$ contains a subsequence which is a Schauder basic sequence in
$(E^*,\sigma(E^*,E))$ \cite[Proposition~7.7]{Sliwa1996}; see also
\cite[Theorem~1]{Sliwa2012}.  Second, we prove that in
the dual of a Grothendieck space every such weak-star basic sequence has
reflexive norm-closed linear span.  Third, Haydon proved that the dual of every
nonreflexive Grothendieck space contains an isometric copy of a large
$L_1$-space \cite{Haydon1987}; in particular it contains an isometric copy of
$L_1[0,1]$.  We then insert into this copy the normalized weakly null sequence
constructed by Johnson, Maurey and Schechtman \cite{JMS2007}, such that every
subsequence has nonreflexive closed linear span.

A further aim is to isolate the quotient-theoretic content of the selection
problem.  For a subsequence $z=(z_n)$ put
\[
 K(z)=\bigcap_{n=1}^{\infty}\ker z_n.
\]
We prove that a normal sequence contains a strongly normal subsequence if and
only if some subsequence $z$ detects a separable quotient $E/K(z)$.  In the
Grothendieck setting this is equivalent to the existence of a subsequence
whose norm-closed linear span is reflexive.  This criterion provides the
bridge between the selection problem and the structural obstructions used
later in the paper.

Finally, we make the construction concrete.  First we record a completely
specified dyadic version of the Johnson--Maurey--Schechtman sequence.  We then
show that every nonatomic probability space $(\Omega,\Sigma,\mu)$ admits a
dyadic splitting which transfers this sequence isometrically into $L_1(\mu)$.
Ordinary integration then gives an explicit counterexample in
$(L_\infty(\mu))^*$.  As the simplest special case one recovers
$L_\infty[0,1]$.  We finally give a concrete representation in
$\ell_\infty^*$, where the only noncanonical ingredient is the choice of a
free ultrafilter.

For simplicity we work over the real scalar field, which is also the setting
of the cited $L_1$ constructions.  Throughout, $\mathbb N=\{1,2,\ldots\}$.

\section{Weak-star basic sequences in duals of Grothendieck spaces}
The first lemma is the structural bridge used throughout the paper.  Its proof
identifies the weak-star closed span of a weak-star basic sequence with the
dual of a separable quotient of the underlying space.  In the Grothendieck
setting that quotient is reflexive, and this forces reflexivity of the
norm-closed span of the sequence.
Recall that a Banach space $E$ is a \emph{Grothendieck space} if every
weak-star convergent sequence in $E^*$ is weakly convergent.  As in
\cite{Sliwa2012}, a Schauder basic sequence in $(E^*,\sigma(E^*,E))$ means a
Schauder basis of its $\sigma(E^*,E)$-closed linear span; in particular, its
coefficient functionals are continuous for the induced weak-star topology.

\begin{lemma}\label{lem:wstar-basic}
Let $E$ be a Grothendieck Banach space and let $(z_n)\subset E^*$ be a Schauder
basic sequence in $(E^*,\sigma(E^*,E))$.  Then its norm-closed linear span
\[
 [z_n]:=\overline{\lin\{z_n:n\in\N\}}^{\|\cdot\|}
\]
is reflexive.
\end{lemma}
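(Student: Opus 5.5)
Let $Z=\overline{\lin\{z_n\}}^{\sigma(E^*,E)}$ be the weak-star closed span and put $K=K(z)=\bigcap_n\ker z_n=Z_\perp$. I will identify $Z$ with $(E/K)^*$ and show that $E/K$ is a separable Grothendieck space, hence reflexive. Reflexivity then passes to $Z$ and to its closed subspace $[z_n]$.

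\emph{Identification.} By the bipolar theorem $Z=(Z_\perp)^\perp=K^\perp$, and $K^\perp$ is isometrically isomorphic to $(E/K)^*$ via $q^*$, where $q:E\to E/K$ is the quotient map. Moreover $q^*$ is a homeomorphism from $\sigma((E/K)^*,E/K)$ onto $\sigma(E^*,E)$ restricted to $K^\perp$. Write $\tilde z_n$ for the functional on $E/K$ with $q^*\tilde z_n=z_n$. Then $(\tilde z_n)$ is a Schauder basis of $(E/K)^*$ for its weak-star topology, and the coefficient functionals are weak-star continuous.

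\emph{Separability of $E/K$.} Since $K=\bigcap_n\ker z_n$, the functionals $\tilde z_n$ separate the points of $E/K$. A weak-star continuous linear functional on $(E/K)^*$ is evaluation at some point of $E/K$, so each coefficient functional $z_n^\#$ is given by some $x_n\in E/K$. Every $f\in(E/K)^*$ satisfies $f=\sum_n f(x_n)\tilde z_n$ (weak-star convergence). Hence if $f$ vanishes on all $x_n$, then $f=0$. By Hahn--Banach, $\overline{\lin}\{x_n\}=E/K$, so $E/K$ is separable.

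\emph{Grothendieck and reflexivity.} Quotients of Grothendieck spaces are Grothendieck. Indeed, if $(g_k)$ is weak-star null in $(E/K)^*$, then $(q^*g_k)$ is weak-star null in $E^*$, hence weakly null in $E^*$. Since $q^*$ is an isomorphic embedding onto the closed subspace $K^\perp$, Hahn--Banach makes $(g_k)$ weakly null as well. A separable Grothendieck space $F$ is reflexive: $B_{F^*}$ is weak-star sequentially compact because $F$ is separable, so the Grothendieck property makes $B_{F^*}$ weakly sequentially compact. Eberlein--\v{S}mulian then gives that $F^*$ is reflexive, hence so is $F$. Thus $(E/K)^*\cong Z$ is reflexive, and the closed subspace $[z_n]\subset Z$ is reflexive.

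\emph{Main obstacle.} The delicate step is the separability argument. It relies on the definition adopted from \cite{Sliwa2012}: a weak-star basic sequence spans its weak-star closure as a Schauder basis, with weak-star continuous coefficient functionals. Only this makes the coefficient functionals points of the predual $E/K$ and lets the expansion of every $f\in(E/K)^*$ be used. If the convention only gave expansions on the weak-star closed span, the identification $Z=(E/K)^*$ from the first step is exactly what covers this.
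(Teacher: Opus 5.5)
Your proof is correct and follows essentially the same route as the paper: identify the weak-star closed span with $(E/K)^*$, use the weak-star continuous coefficient functionals as points of $E/K$ to get separability, pass the Grothendieck property to the quotient, and conclude reflexivity via Eberlein--\v{S}mulian. The only difference is the last step, where you simply note that $[z_n]$ is a closed subspace of the reflexive space $Z$. The paper instead (Steps~6--7) additionally proves $Z=[z_n]$ by identifying the weak-star and weak topologies on $Z$, which is more than the lemma needs.
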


\begin{proof}
We divide the argument into seven steps.  The purpose is to identify the
weak-star closed span of $(z_n)$ with the dual of a separable quotient of
$E$, and then to use the Grothendieck property to make that quotient
reflexive.

\Step{1: pass from the weak-star closed span to a quotient.}
Let
\[
 Y=\overline{\lin\{z_n:n\in\N\}}^{\sigma(E^*,E)}
\]
and set
\[
 M={}^\perp Y=\{x\in E:y(x)=0\text{ for every }y\in Y\}.
\]
The subspace $M$ is closed because it is an intersection of kernels of
continuous linear functionals.  Let $q:E\to E/M$ be the quotient map.  Since
$Y$ is weak-star closed and $M={}^\perp Y$, the annihilator theorem gives
\[
 Y=M^\perp=q^*((E/M)^*).
\tag{1}
\]
Thus from now on we may regard $Y$ as $(E/M)^*$.

\Step{2: represent the coefficient functionals by vectors of $E$.}
Let $\varphi_n:Y\to\mathbb R$ be the $n$-th coefficient functional of the
weak-star Schauder basis $(z_n)$.  By definition, $\varphi_n$ is continuous
for the topology induced by $\sigma(E^*,E)$ on $Y$.  By weak-star continuity, there exist finitely many vectors
$u_1,\ldots,u_m\in E$ and $C>0$ such that
\[
 |\varphi_n(y)|\le C\max_{1\le j\le m}|y(u_j)|,
 \qquad y\in Y.
\]
Define
\[
 T:Y\to\mathbb R^m,\qquad T(y)=(y(u_1),\ldots,y(u_m)).
\]
If $T(y)=0$, the displayed estimate gives $\varphi_n(y)=0$.  Therefore
$\varphi_n$ factors through the subspace $T(Y)\subset\mathbb R^m$.  Extend the
resulting functional on $T(Y)$ to all of $\mathbb R^m$.  Hence for suitable
scalars $a_1,\ldots,a_m$,
\[
 \varphi_n(y)=\sum_{j=1}^m a_jy(u_j)
             =y\!\left(\sum_{j=1}^m a_ju_j\right).
\]
With
\[
        x_n=\sum_{j=1}^m a_ju_j\in E
\]
we obtain
\[
        \varphi_n(y)=y(x_n),\qquad y\in Y.
\tag{2}
\]

\Step{3: show that the quotient $E/M$ is separable.}
We claim that
\[
 \overline{\lin\{q(x_n):n\in\N\}}=E/M.
\tag{3}
\]
Suppose not.  Let
$L=\overline{\lin\{q(x_n):n\in\N\}}$, a proper closed subspace of $E/M$.
By Hahn--Banach there is a nonzero functional
$\eta\in(E/M)^*$ such that $\eta|_L=0$.  In particular,
$\eta(q(x_n))=0$ for every $n$.

By (1), $q^*\eta\in Y$.  Using (2),
\[
 \varphi_n(q^*\eta)
 =(q^*\eta)(x_n)
 =\eta(q(x_n))=0
 \qquad(n\in\mathbb N).
\]
Thus all coefficients of the vector $q^*\eta\in Y$ with respect to the
Schauder basis $(z_n)$ are zero.  Uniqueness of basis expansions gives
$q^*\eta=0$.  Since $q^*$ is injective, $\eta=0$, contradicting the choice of
$\eta$.  Therefore (3) holds.

The set $\{q(x_n):n\in\mathbb N\}$ is countable and has dense linear span, so
$E/M$ is separable.

\Step{4: prove that $E/M$ is Grothendieck.}
Take a weak-star null sequence $(\eta_j)\subset(E/M)^*$.  For every $x\in E$,
\[
 (q^*\eta_j)(x)=\eta_j(qx)\longrightarrow0,
\]
so $(q^*\eta_j)$ is weak-star null in $E^*$.  Because $E$ is Grothendieck,
\[
        q^*\eta_j\longrightarrow0
        \quad\text{weakly in }E^*.
\]
We need this convergence inside the subspace $M^\perp=q^*((E/M)^*)$.
The weak topology of a norm-closed subspace is exactly the topology inherited
from the ambient weak topology: every bounded linear functional on $M^\perp$
extends by Hahn--Banach to a bounded linear functional on $E^*$, that is, to
an element of $E^{**}$.  Therefore
$q^*\eta_j\to0$ weakly in $M^\perp$.

Finally $q^*:(E/M)^*\to M^\perp$ is an isometric isomorphism, so pulling the
weak convergence back through $q^*$ gives
\[
        \eta_j\longrightarrow0
        \quad\text{weakly in }(E/M)^*.
\]
Hence $E/M$ is Grothendieck.

\Step{5: use separability plus Grothendieck to obtain reflexivity.}
We have proved that $E/M$ is both separable and Grothendieck.  Indeed, the dual ball
$B_{(E/M)^*}$ is weak-star compact and metrizable.  Therefore every sequence
in it has a weak-star convergent subsequence.  Grothendieck turns that
subsequence into a weakly convergent one.  Thus the dual ball is weakly
sequentially compact.  By Eberlein--Smulian it is weakly compact.  Hence
$(E/M)^*$, and therefore $E/M$, is reflexive.

\Step{6: identify the weak-star and weak topologies on $Y$.}
By (1), $Y$ is isometrically $(E/M)^*$.  Evaluation of $y\in Y$ at a vector
$x\in E$ depends only on the coset $qx$, because every $y\in Y=M^\perp$
vanishes on $M$.  Consequently the topology induced on $Y$ by
$\sigma(E^*,E)$ is exactly
\[
        \sigma((E/M)^*,E/M).
\]
Since $E/M$ is reflexive, its canonical image in $(E/M)^{**}$ is all of
$(E/M)^{**}$.  Therefore
\[
        \sigma((E/M)^*,E/M)
        =\sigma((E/M)^*,(E/M)^{**}),
\]
which is the weak topology of $Y$.

\Step{7: identify $Y$ with the norm-closed span of $(z_n)$.}
By definition,
\[
  Y=\overline{\lin\{z_n:n\in\mathbb N\}}^{\,w^*}.
\]
On $Y$ the weak-star topology just identified is the weak topology, hence
\[
  Y=\overline{\lin\{z_n:n\in\mathbb N\}}^{\,w}.
\]
For a linear subspace of a Banach space, weak closure and norm closure
coincide.  One inclusion is immediate because norm convergence implies weak
convergence.  For the reverse inclusion, if a point is outside the norm
closure, Hahn--Banach separates it from that closed subspace, so it is also
outside the weak closure.  Therefore
\[
  Y=\overline{\lin\{z_n:n\in\mathbb N\}}^{\|\cdot\|}=[z_n].
\]
But $Y\cong(E/M)^*$ is reflexive.  Hence $[z_n]$ is reflexive.
\end{proof}

\begin{corollary}\label{cor:strong-reflexive}
Let $E$ be a Grothendieck Banach space.  Every strongly normal sequence in
$E^*$ contains a subsequence which is a Schauder basic sequence in
$(E^*,\sigma(E^*,E))$ and whose norm-closed linear span is reflexive.
\end{corollary}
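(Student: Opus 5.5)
The corollary follows by combining the cited selection result with Lemma~\ref{lem:wstar-basic}. Let $(y_n)\subset S(E^*)$ be strongly normal. By \cite[Proposition~7.7]{Sliwa1996} (see also \cite[Theorem~1]{Sliwa2012}), there is a subsequence $(z_k)=(y_{n_k})$ that is a Schauder basic sequence in $(E^*,\sigma(E^*,E))$. This means it is a Schauder basis of its weak-star closed linear span, with weak-star continuous coefficient functionals. This is exactly the hypothesis of Lemma~\ref{lem:wstar-basic}.

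Since $E$ is Grothendieck, Lemma~\ref{lem:wstar-basic} applied to $(z_k)$ shows that $[z_k]$ is reflexive. The lemma's proof also identifies $[z_k]$ with the weak-star closed span $Y\cong(E/M)^*$. So the chosen subsequence has both required properties.

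There is no substantial obstacle. The only point to check is that the notion of weak-star basic sequence in the cited proposition matches the one used in Lemma~\ref{lem:wstar-basic}. The paper explicitly adopts the convention of \cite{Sliwa2012}, including continuity of the coefficient functionals for the induced weak-star topology, so they agree. If the cited result only gave basicity in a weaker sense, one would need to verify that the extracted subsequence has weak-star continuous coefficient functionals on its weak-star closed span. I would rely on the paper's stated convention here rather than reprove the selection result.
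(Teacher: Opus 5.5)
Your proposal is correct and is exactly the paper's argument: extract a subsequence that is Schauder basic in $(E^*,\sigma(E^*,E))$ via \cite[Theorem~1]{Sliwa2012} (equivalently \cite[Proposition~7.7]{Sliwa1996}), then apply Lemma~\ref{lem:wstar-basic} to it. Your check that this notion of weak-star basic sequence matches the convention fixed before Lemma~\ref{lem:wstar-basic} (weak-star continuous coefficient functionals) is consistent with the paper.
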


\begin{proof}
By \cite[Theorem~1]{Sliwa2012}, every strongly normal sequence in $E^*$
has a subsequence which is Schauder basic in $(E^*,\sigma(E^*,E))$.  Apply
Lemma~\ref{lem:wstar-basic}.
\end{proof}

\section{The obstruction in nonreflexive Grothendieck spaces}
We first recall the two external results used in the construction.

\begin{theorem}[Johnson--Maurey--Schechtman \cite{JMS2007}]\label{thm:JMS}
There exists a normalized weakly null sequence $(u_n)$ in $L_1[0,1]$ such that,
for every $\varepsilon>0$, the Haar basis of $L_1[0,1]$ is
$(1+\varepsilon)$-equivalent to a block basis of every subsequence of
$(u_n)$.
\end{theorem}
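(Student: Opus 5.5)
Since the statement is quoted from \cite{JMS2007}, the paper takes it as a black box. If I had to reprove it, I would proceed as follows. The basic principle is that in $L_1$, equivalence of sequences follows from equality of joint distributions. If a finite family $(f_1,\dots,f_N)$ has the same joint distribution as $(h_1,\dots,h_N)$, then $\|\sum a_kf_k\|_1=\|\sum a_kh_k\|_1$ for all scalars. The same holds up to $1+\varepsilon$ when the distributions agree after an $L_1$-perturbation of size $\varepsilon_k$ with $\sum_k\varepsilon_k<\varepsilon$. So the goal becomes: build $(u_n)$ so that every subsequence admits blocks whose joint law is almost that of the $L_1$-normalized Haar system. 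Here $(h_k)$ is viewed through its tree structure: each $h_k$ is supported on a dyadic atom of the $\sigma$-algebra generated by earlier ones and splits it symmetrically.

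\textbf{Construction.} I would work on a product space $\Omega=\prod_{i}\{-1,1\}^{\N}$, which is measure-isomorphic to $[0,1]$. I would enumerate all finite ``instructions'' (a level $k$ together with a dyadic atom $A$ of the first $k$ Haar functions) so that each instruction recurs infinitely often. I would then let $u_n$ be a normalized function of the form $\mathbf 1_{A}\cdot g_n$ plus a small correction. Here $g_n$ is a highly oscillating, mean-zero, bounded function built from fresh coordinates, for example an average of many independent Rademachers, or a lacunary product $\prod_j(1+r_{n,j})-1$ truncated. The guiding example is the Riesz-product trick: $\prod_j(1+r_j)$ is a positive function of mean one, concentrated on a set of small measure. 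Combining such factors lets a far-out block $\sum_{n\in F}c_nu_n$ take only two values of equal mass on an atom $A$, up to $\varepsilon$ in $L_1$, and vanish off $A$. That is exactly a Haar splitting of $A$.

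\textbf{Key steps, in order.} (1) Prove a one-step splitting lemma. Given a subsequence, a finite $\sigma$-algebra $\mathcal F$ generated by previously chosen blocks, an atom $A\in\mathcal F$ and $\delta>0$, there is a finite block of the subsequence, beyond any prescribed index, which is $\delta$-close in $L_1$ to a function $\pm c\mathbf 1_{A}$. The two signs occur on halves of $A$, and the function is almost independent of $\mathcal F$. Since each instruction recurs infinitely often and the $g_n$ use fresh coordinates, any subsequence contains infinitely many $u_n$ attached to $A$. (2) Iterate with $\delta_k=\varepsilon 2^{-k}$ along the Haar ordering, and use the distributional principle plus a standard perturbation argument to get $(1+\varepsilon)$-equivalence to the Haar basis. (3) Check weak nullness separately. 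I would show $(u_n)$ is uniformly integrable, because each $|u_n|$ is dominated in distribution by a fixed integrable function. I would also show $\int u_n\mathbf 1_B\to0$ for every measurable $B$, because $g_n$ depends on coordinates escaping to infinity and $\mathbb E g_n=0$. By Dunford--Pettis this gives $u_n\to0$ weakly.

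\textbf{Main obstacle.} The tension lies between (1) and (3). Weak nullness forces the $u_n$ to be uniformly integrable. But producing Haar-like blocks, whose spans are not reflexive and which are far from weakly null, requires blocks that concentrate mass, and this is not uniformly integrable at the block level. Normalized averages of independent mean-zero variables become Gaussian-like, not two-valued, so a naive choice fails. I would first try to resolve this by making each $u_n$ a normalized difference $\bigl(\prod_{j\le m_n}(1+r_{n,j})-1\bigr)\mathbf 1_A$ with fixed moderate $m_n$, which keeps uniform integrability. I would then let the \emph{blocks} multiply such factors through a conditional-expectation/martingale arrangement, with long sums telescoping into high-order Riesz products. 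Making this telescoping work for an arbitrary subsequence is where I expect most of the difficulty, and where the actual JMS argument is genuinely delicate.
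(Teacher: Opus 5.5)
Your reduction to near-equality of joint distributions is fine, but step (1) is the whole content of the theorem, and the design you propose cannot deliver it. The paper itself does not reprove the result: it cites \cite{JMS2007} and, in Section~\ref{sec:concreteJMS}, records the JMS construction with explicit parameters. Three things go wrong in your design.

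First, your sequence is not weakly null. The instructions include atoms $A$ of arbitrarily small measure, and a normalized $u_n$ that lives, up to a small correction, on $A_n$ with $\lambda(A_n)\to0$ has $\int_{A_n}|u_n|$ bounded below. So $(u_n)$ is not uniformly integrable, and Dunford--Pettis rules out weak nullity. Your claim in (3), that all $|u_n|$ are dominated in distribution by one integrable function, fails for the same reason.

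Second, one instruction per term is incompatible with ``every subsequence''. The $u_n$ attached to a single atom $A_0$ form a subsequence, so it is false that every subsequence contains infinitely many terms attached to each atom. The atoms generated by earlier blocks are sets in your fresh coordinates, so they need not be among the enumerated instructions at all.

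Third, blocks are linear combinations, so they cannot multiply Riesz factors. Uniform integrability of your $u_n$ forces $\sup_n m_n<\infty$. Then every block along the $A_0$-subsequence is $\mathbf 1_{A_0}P$, with $P$ a Walsh polynomial of bounded degree in coordinates independent of $A_0$. Bonami's inequality makes $\|P\|_1$ and $\|P\|_2$ equivalent there, so that closed span is isomorphic to a Hilbert space and contains no Haar-equivalent block basis. If the orders are unbounded, the telescoping differences $r_k\prod_{j<k}(1+r_j)$ are again non-uniformly-integrable spikes.

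What is missing is the JMS mechanism: a layered design in the sparse (Poisson) regime. In the notation of Section~\ref{sec:concreteJMS}, every $f_i=\sum_n a_nh_{i,n}$ carries a component on \emph{every} member $E_n$ of the dyadic algebra, and each member recurs infinitely often. For fixed $n$, the $h_{i,n}$ are independent, symmetric, $\{-1,0,1\}$-valued, dyadic-measurable, and supported on a set of relative measure $1/k_n$ in $E_n$.

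The weights make $\|a_nh_{i,n}\|_1=\lambda(E_n)k_n^{-1/4}$ summable uniformly in $i$. Hence most of the mass of $f_i$ sits on the first layer, and small atoms enter only with tiny weight. Since each layer is also weakly null in $i$, $(f_i)$ is weakly null.

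A sum of only $|\sigma|=\varepsilon k_n$ of the $h_{i,n}$ is not Gaussian. Two or more of them are nonzero only on a set of relative measure $O(\varepsilon^2)$. So the sum, of norm about $\varepsilon\lambda(E_n)$, lies within $O(\varepsilon^2)\lambda(E_n)$ of $\mathbf 1_A-\mathbf 1_B$, with $A,B\subset E_n$ disjoint dyadic sets of equal measure. Repeat this on the uncovered part of the atom, which is again some $E_N$ with $N$ as large as needed, to obtain an approximate Haar splitting.

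In the block $a_N^{-1}\sum_{i\in\sigma}f_i$ the three groups of layers behave differently:
\begin{itemize}
\item the $N$-th layer adds up almost linearly;
\item the lower layers grow only like $|\sigma|^{1/2}$ by orthogonality, see (17) and (13);
\item the higher layers are negligible by (14).
\end{itemize}
Because every $f_i$ carries every layer, any subsequence supplies $\varepsilon k_N$ suitable terms at every level. This is what resolves the tension you identified, and it is exactly what your one-instruction-per-term design lacks.
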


Consequently, for every subsequence $(u_{n_k})$ the space $[u_{n_k}]$ contains
a closed subspace isomorphic to $L_1[0,1]$.  In particular,
\[
 [u_{n_k}]\quad\text{is nonreflexive for every subsequence }(u_{n_k}).
\tag{4}
\]

We shall also use the following consequence of Haydon's theorem.

\begin{theorem}[Haydon \cite{Haydon1987}]\label{thm:Haydon}
If $E$ is a nonreflexive Grothendieck Banach space, then Haydon's theorem
implies that $E^*$ contains an isometric copy of
\[
 L_1(\{0,1\}^{\omega_1})
\]
with respect to the usual product probability measure.
\end{theorem}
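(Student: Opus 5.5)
The plan is to reduce the statement to Haydon's theorem in its ``weak-star sequential compactness'' form. That result, from the paper ``An unconditional result about Grothendieck spaces'' \cite{Haydon1987}, says: if $E$ is a Banach space whose dual $E^*$ contains no copy of $L_1(\{0,1\}^{\omega_1})$, then $B_{E^*}$ is weak-star sequentially compact. Moreover, his construction of the copy, in the case where it exists, produces an isometric embedding. So the proof has two parts. The first is a short soft argument showing that a nonreflexive Grothendieck space cannot have a weak-star sequentially compact dual ball. The second is an appeal to Haydon to obtain the embedded $L_1$-space.

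\emph{Step 1 (contrapositive of the Step~5 argument of Lemma~\ref{lem:wstar-basic}).} Suppose $E$ is Grothendieck and $B_{E^*}$ is weak-star sequentially compact. Take any sequence in $B_{E^*}$. It has a weak-star convergent subsequence, with limit in $B_{E^*}$ since the ball is weak-star closed. By the Grothendieck property that subsequence converges weakly. Hence $B_{E^*}$ is weakly sequentially compact, and by Eberlein--\v{S}mulian it is weakly compact. Then $E^*$, and hence $E$, is reflexive. Consequently, for nonreflexive Grothendieck $E$, the ball $B_{E^*}$ fails to be weak-star sequentially compact.

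\emph{Step 2 (Haydon's dichotomy).} By Step~1 and the contrapositive of Haydon's theorem, $E^*$ contains a copy of $L_1(\{0,1\}^{\omega_1})$. To get the isometric version, I would follow Haydon's proof rather than only its statement. Failure of weak-star sequential compactness yields a bounded family $(x^*_{s})$ indexed by the dyadic tree of height $\omega_1$ in $E^*$. This family is ``weak-star independent'', in the sense that the associated sets $\{x^*:\ x^*(x)>a\}$, $\{x^*:\ x^*(x)<b\}$ form an independent family of length $\omega_1$. One then builds a weak-star continuous surjection from a weak-star compact subset $K\subset B_{E^*}$ onto $\{0,1\}^{\omega_1}$. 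Lifting the product measure through this surjection, via a Radon measure on $K$, and taking barycenters gives the map
\[
 L_1(\{0,1\}^{\omega_1})\ni f\longmapsto \int_K x^*\, f\circ\pi\,d\nu(x^*)\in E^*.
\]
A Hahn--Banach/norming argument on the independent family should then show that this map is an isometry onto its range. Here $\pi$ is the surjection, $\nu$ is the lifted measure, and the integral is the weak-star integral.

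The main obstacle is Step~2, specifically the isometric (rather than merely isomorphic) conclusion and the passage from non-sequential-compactness to an uncountable independent family. The second point uses the $\omega_1$-length transfinite construction in Haydon's argument, which handles the Grothendieck/weak-star setting in ZFC. I would not reprove this. I would cite \cite{Haydon1987} for the existence of the embedding, and only check that the embedding he produces is isometric, renorming if necessary by the standard fact that an isomorphic copy of $L_1(\{0,1\}^{\omega_1})$ in a dual contains an almost-isometric, hence after a further selection isometric, copy of itself. For the applications later in the paper only an isomorphic copy of $L_1[0,1]$ inside it is actually needed, which is immediate from either version.
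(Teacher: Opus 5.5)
Your Step~1 is correct: it is the Step~5 argument of Lemma~\ref{lem:wstar-basic} read contrapositively. Step~2, however, rests on a version of Haydon's theorem that is false. Failure of weak$^*$ sequential compactness of $B_{E^*}$ does not force $E^*$ to contain $L_1(\{0,1\}^{\omega_1})$. Hagler and Odell constructed a Banach space $X$ containing no copy of $\ell_1$ whose dual ball is not weak$^*$ sequentially compact. A dual containing an isomorphic copy of $L_1[0,1]$ forces $\ell_1\subset X$ (Pe\l czy\'nski's theorem, applied to a separable subspace of $X$ that norms the copy). Hence $X^*$ contains no copy of $L_1[0,1]$, and so, by Lemma~\ref{lem:L1insideHaydon}, none of $L_1(\{0,1\}^{\omega_1})$.

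For the same reason, your weak$^*$-independent family cannot be extracted from non-sequential-compactness alone. If the sets $\{x^*:x^*(x_\alpha)<a\}$, $\{x^*:x^*(x_\alpha)>b\}$ are independent on $B_{X^*}$, Rosenthal's criterion already makes $(x_\alpha)$ equivalent to the unit vector basis of $\ell_1$ in $X$.

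So the Grothendieck property has to enter more strongly than in Step~1. For instance, since $E^*$ is nonreflexive, James's characterization gives $(x_m^*)\subset B_{E^*}$, $(\Phi_n)\subset B_{E^{**}}$ and $\theta>0$ with $\Phi_n(x_m^*)=\theta$ for $n\le m$ and $\Phi_n(x_m^*)=0$ for $n>m$. Suppose a convex block subsequence $(y_k^*)$ converged weakly to some $y^*$. Then $\Phi_n(y^*)=\theta$ for every $n$. On the other hand, a weak$^*$ cluster point $\Phi\in E^{**}$ of $(\Phi_n)$ satisfies $\Phi(y_k^*)=0$ for all $k$, while $\Phi(y^*)=\theta$; this is impossible.

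By the Grothendieck property, no convex block subsequence of $(x_m^*)$ is even weak$^*$ convergent. This convex-block form of non-compactness, rather than mere failure of sequential compactness, is the kind of input Haydon's ZFC transfinite argument of length $\omega_1$ works with. The paper does not reprove any of this; it simply quotes \cite{Haydon1987}.

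The isometric part of your plan also does not go through as written. The barycentric map $f\mapsto\int_K x^*\,(f\circ\pi)\,d\nu(x^*)$ only satisfies the upper estimate $\le\|f\|_1$. A continuous surjection $\pi$ by itself gives no lower estimate: if $K=-K$, $\nu$ is symmetric and $\pi$ is even, the map is identically zero. The lower bound is exactly where the independence and convexity structure must be used, and your ``Hahn--Banach/norming argument'' does not supply it.

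Moreover, ``almost isometric, hence after a further selection isometric'' is not a valid principle. Take $\ell_1=c_0^*$ with an equivalent strictly convex dual norm. By James, it contains $(1+\varepsilon)$-copies of $\ell_1$ for every $\varepsilon>0$. It contains no isometric copy, since $\ell_1$ is not strictly convex. Renorming is not permitted either, because the statement concerns the given dual norm.

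As you observe, the later sections only need an isomorphic copy of $L_1[0,1]$, renormalizing sequences as in Proposition~\ref{prop:L1-hereditary-basis}. So this second issue is harmless for the applications, but the first is a genuine gap in your derivation.
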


For completeness we record the elementary consequence that is actually needed.

\begin{lemma}\label{lem:L1insideHaydon}
The space $L_1(\{0,1\}^{\omega_1})$ contains an isometric copy of $L_1[0,1]$.
Consequently, if $E$ is a nonreflexive Grothendieck space, then there is a
linear isometry
\[
 J:L_1[0,1]\longrightarrow E^*.
\]
\end{lemma}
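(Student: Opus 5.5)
The plan is to realize $L_1[0,1]$ inside $L_1(\{0,1\}^{\omega_1})$ by a measure-preserving map, and then compose with Haydon's embedding.

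\emph{Step 1: a measure-algebra embedding.} Let $\lambda$ be the product measure on $\Omega=\{0,1\}^{\omega_1}$, and let $\pi:\Omega\to\{0,1\}^{\N}$ be restriction to the first $\omega$ coordinates. Since $\lambda$ is the product measure, $\pi$ pushes $\lambda$ forward to the product measure $\lambda_\omega$ on $\{0,1\}^{\N}$. Next, the binary-expansion map
\[
 \beta:\{0,1\}^{\N}\to[0,1],\qquad \beta(\varepsilon)=\sum_{k\ge1}\varepsilon_k2^{-k},
\]
pushes $\lambda_\omega$ forward to Lebesgue measure $m$. To check this, it suffices to verify it on dyadic intervals: the preimage of $[j2^{-n},(j+1)2^{-n}]$ is a cylinder of measure $2^{-n}$. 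The generated $\sigma$-algebra is then handled by a $\pi$-$\lambda$ argument. Put $\tau=\beta\circ\pi:\Omega\to[0,1]$, which is measurable and satisfies $\tau_*\lambda=m$.

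\emph{Step 2: the isometry.} Define $V f=f\circ\tau$ for $f\in L_1[0,1]$. This is well defined on equivalence classes, because $\tau_*\lambda=m$ means $m$-null sets pull back to $\lambda$-null sets. $V$ is linear, and the change-of-variables formula gives
\[
 \int_\Omega|f\circ\tau|\,d\lambda=\int_0^1|f|\,dm.
\]
This identity holds first for simple functions and then for all of $L_1$ by monotone convergence. Hence $V$ is a linear isometry of $L_1[0,1]$ into $L_1(\{0,1\}^{\omega_1})$.

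\emph{Step 3: conclusion.} By Theorem~\ref{thm:Haydon} there is a linear isometry
\[
 H:L_1(\{0,1\}^{\omega_1})\to E^*,
\]
and we set $J=H\circ V$.

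The only point needing care is Step~1's identification of the image measure. The delicate spots are that $\beta$ is injective only off the countable set of dyadic rationals, which is $\lambda_\omega$-null, and that the pushforward is determined by its values on a $\pi$-system. Both are routine, so no real obstacle is expected.
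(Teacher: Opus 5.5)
Your proof is correct and is essentially the paper's argument. The paper likewise takes a measure-preserving map $\theta(\varepsilon)=\sum_n\varepsilon(\alpha_n)2^{-n}$ built from countably many distinct coordinates $\alpha_n<\omega_1$ (you take the first $\omega$ coordinates and factor through $\{0,1\}^{\mathbb N}$), uses the isometry $f\mapsto f\circ\theta$, and composes it with Haydon's embedding; the only slip in yours is that the preimage of a closed dyadic interval under $\beta$ is a cylinder only up to at most two points, which are null and do not affect the argument.
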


\begin{proof}
Choose distinct coordinates $\alpha_1,\alpha_2,\ldots<\omega_1$ and define
\[
 \theta(\varepsilon)=\sum_{n=1}^{\infty}
 \frac{\varepsilon(\alpha_n)}{2^n},
 \qquad \varepsilon\in\{0,1\}^{\omega_1}.
\]
Under the product Bernoulli measure, $\theta$ has Lebesgue distribution on
$[0,1]$; equivalently, $\theta_\#\mu=\lambda$.  Hence
\[
 V:L_1[0,1]\longrightarrow L_1(\{0,1\}^{\omega_1}),
 \qquad Vf=f\circ\theta,
\]
is a well-defined linear isometry, because
\[
 \|Vf\|_1=\int |f\circ\theta|\,d\mu
          =\int_0^1|f|\,d\lambda=\|f\|_1.
\]
Combining $V$ with the isometric embedding supplied by
Theorem~\ref{thm:Haydon} gives the required $J$.
\end{proof}

We can now state the main negative result.

\begin{theorem}\label{thm:main-negative}
Let $E$ be a nonreflexive Grothendieck Banach space.  Then there exists a
normal sequence $(y_n)$ in $E^*$ which contains no strongly normal subsequence.
In fact, $(y_n)$ may be chosen normalized and weakly null in $E^*$.
\end{theorem}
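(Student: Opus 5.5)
The plan is to transport the Johnson--Maurey--Schechtman sequence into $E^*$ and argue by contradiction using Corollary~\ref{cor:strong-reflexive}. By Lemma~\ref{lem:L1insideHaydon} there is a linear isometry $J:L_1[0,1]\to E^*$. Let $(u_n)$ be the sequence of Theorem~\ref{thm:JMS} and put
\[
 y_n=Ju_n,\qquad n\in\N .
\]
Since $J$ is an isometry, $\|y_n\|=\|u_n\|_1=1$, so $(y_n)\subset S(E^*)$.

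Next I check that $(y_n)$ is weakly null in $E^*$ and hence normal. Because $J$ is a bounded linear operator, it is weak-to-weak continuous. Since $(u_n)$ is weakly null in $L_1[0,1]$, the sequence $y_n=Ju_n$ is weakly null in $E^*$. Weak convergence in $E^*$ (testing against $E^{**}\supset E$) implies weak-star convergence, so $y_n\to0$ in $\sigma(E^*,E)$. Thus $(y_n)$ is normal, and it is normalized and weakly null as the statement requires.

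It remains to show that no subsequence is strongly normal. Suppose some subsequence $(y_{n_k})$ were strongly normal. A subsequence of a strongly normal sequence is again strongly normal, since $G$ can only grow when we pass to a subsequence; so we may work with $(y_{n_k})$ directly. By Corollary~\ref{cor:strong-reflexive} there is a further subsequence $(y_{m_j})$, with $(m_j)$ a subsequence of $(n_k)$, whose norm-closed span $[y_{m_j}]$ is reflexive. Since $J$ is an isometry onto its closed range, it maps $[u_{m_j}]$ isometrically onto $[y_{m_j}]$. Hence $[u_{m_j}]$ is reflexive. But $(u_{m_j})$ is a subsequence of $(u_n)$, so this contradicts (4).

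There is no real obstacle here: all the substance sits in Lemma~\ref{lem:wstar-basic}, Haydon's theorem and the Johnson--Maurey--Schechtman construction. The only points needing care are two. First, weak nullity must survive the embedding $J$; this holds by weak continuity of bounded operators. Second, reflexivity of the span must transfer back through $J$; this holds because an isometric embedding identifies norm-closed spans. The rest is bookkeeping about passing to further subsequences.
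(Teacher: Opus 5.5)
Your proposal is correct and follows the paper's proof essentially step for step. Both transport the Johnson--Maurey--Schechtman sequence into $E^*$ through Haydon's isometric copy of $L_1[0,1]$, get weak nullity from boundedness of $J$, and derive the contradiction from Corollary~\ref{cor:strong-reflexive} together with the nonreflexivity of every subsequence span (4). Your remark that subsequences of strongly normal sequences stay strongly normal is true but not needed, since the corollary applies directly to $(y_{n_k})$.
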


\begin{proof}
Let $(u_n)$ be the normalized weakly null sequence supplied by
Theorem~\ref{thm:JMS}.  Let
\[
        J:L_1[0,1]\to E^*
\]
be the isometric embedding from Lemma~\ref{lem:L1insideHaydon}, and set
\[
        y_n=Ju_n.
\]
We verify the required properties one at a time.

\Step{1: normalization.}
Because $J$ is an isometry,
\[
        \|y_n\|=\|Ju_n\|=\|u_n\|_1=1.
\]

\Step{2: weak nullity.}
Take any $F\in E^{**}$.  Then $J^*F\in (L_1[0,1])^*=L_\infty[0,1]$, and
\[
        F(y_n)=F(Ju_n)=(J^*F)(u_n)\to0
\]
because $u_n\to0$ weakly in $L_1[0,1]$.  Hence $y_n\to0$ weakly in $E^*$.
Weak convergence implies weak-star convergence, so $(y_n)$ is normal.

\Step{3: assume a strongly normal subsequence exists.}
Suppose, for contradiction, that $(y_{n_k})$ is strongly normal.  Since $E$
is Grothendieck, Corollary~\ref{cor:strong-reflexive} gives a further
subsequence $(y_{n_{k_j}})$ such that
\[
        [y_{n_{k_j}}]
\]
is reflexive.

\Step{4: identify this closed span with a JMS closed span.}
Because $J$ is an isomorphic embedding with closed range,
\[
        [y_{n_{k_j}}]
        =[Ju_{n_{k_j}}]
        =J([u_{n_{k_j}}]).
\]
The last equality deserves a comment.  Finite linear combinations are
carried by $J$ to the corresponding finite linear combinations; because $J$
is an isometry, norm limits are preserved in both directions on its range.

\Step{5: contradict reflexivity.}
The JMS property says that every subsequence $(u_{n_{k_j}})$ has closed span
containing a closed subspace isomorphic to $L_1[0,1]$.  Hence
$[u_{n_{k_j}}]$ is nonreflexive.  Isomorphic Banach spaces have the same
reflexivity status, so its image $J([u_{n_{k_j}}])=[y_{n_{k_j}}]$ is also
nonreflexive.  This contradicts Step~3.

Therefore no subsequence of $(y_n)$ can be strongly normal.
\end{proof}

\section{A characterization of reflexivity}
The preceding theorem and the positive WCG result give a complete
characterization inside the class of Grothendieck spaces.

\begin{theorem}\label{thm:characterization}
Let $E$ be a Grothendieck Banach space.  The following conditions are
equivalent:
\begin{enumerate}
\item $E$ is reflexive;
\item every normal sequence in $E^*$ contains a strongly normal subsequence.
\end{enumerate}
\end{theorem}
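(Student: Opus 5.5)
The equivalence follows by combining Theorem~\ref{thm:main-negative} with the positive WCG result \cite[Proposition~7.12]{Sliwa1996} (see also \cite[Proposition~4]{Sliwa2012}). Note that the Grothendieck hypothesis is needed only for (2)$\Rightarrow$(1); the implication (1)$\Rightarrow$(2) holds for every reflexive space.

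For (1)$\Rightarrow$(2), assume $E$ is reflexive. The weakly compact unit ball $B_E$ generates $E$, so $E$ is weakly compactly generated. The WCG result then says that every normal sequence in $E^*$ has a strongly normal subsequence.

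If one wants to avoid the external citation, there is a direct argument. Let $(y_n)$ be normal. Reflexivity turns weak-star convergence in $E^*$ into weak convergence, so $(y_n)$ is weakly null and normalized. By Bessaga--Pe{\l}czy\'nski it has a basic subsequence $(z_n)$. Because $E^*$ is reflexive, $(z_n)$ is shrinking and boundedly complete, so $[z_n]$ is reflexive and weak-star closed. We may also assume $(z_n)$ has biorthogonal functionals represented by vectors $x_n\in E$ via $E=E^{**}$. One then checks directly that $E/K(z)$ is separable and finds a dense set on which $\sum|z_n(x)|<\infty$: for instance, take $x=x_m$, or first pass to a further subsequence using a diagonal argument.

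The direct route takes more bookkeeping than the citation, so I would simply invoke \cite[Proposition~7.12]{Sliwa1996}. The only check needed is that reflexive spaces are WCG, which is immediate from the weak compactness of $B_E$.

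For (2)$\Rightarrow$(1), argue by contraposition. Suppose $E$ is Grothendieck and not reflexive. Theorem~\ref{thm:main-negative} provides a normalized, weakly null, hence normal, sequence $(y_n)\subset E^*$ with no strongly normal subsequence. This contradicts (2).

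Neither step is a real obstacle, since all the substantive work is in Theorem~\ref{thm:main-negative} and in the cited WCG result.
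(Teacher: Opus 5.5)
Your proof is correct and takes the same route as the paper. For (1)$\Rightarrow$(2) you use that reflexive spaces are WCG and invoke the cited positive WCG result, and for (2)$\Rightarrow$(1) you argue by contraposition from Theorem~\ref{thm:main-negative}. The one detail the paper adds is the finite-dimensional case, which you omit. There the cited WCG proposition is only applied for infinite-dimensional $E$, and condition (2) holds vacuously, because in finite dimensions weak-star convergence is norm convergence, so no normalized sequence in $E^*$ is weak-star null.
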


\begin{proof}
Assume first that $E$ is reflexive.  Then $E$ is WCG.  If $E$ is
infinite-dimensional, \cite[Proposition~4]{Sliwa2012} shows that every
normal sequence in $E^*$
contains a strongly normal subsequence.  If $E$ is
finite-dimensional, there is no normal sequence in $S(E^*)$, so (2) holds
vacuously.

Conversely, suppose that $E$ is not reflexive.  By
Theorem~\ref{thm:main-negative}, $E^*$ contains a normal sequence with no
strongly normal subsequence, contradicting (2).  Hence $E$ is reflexive.
\end{proof}

\begin{corollary}\label{cor:linfty}
There exists a normalized weakly null sequence in $\ell_\infty^*$ which has no
strongly normal subsequence.
\end{corollary}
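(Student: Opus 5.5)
The plan is to apply Theorem~\ref{thm:main-negative} with $E=\ell_\infty$. Two facts are needed about $\ell_\infty$: it is a Grothendieck space, and it is not reflexive. Once these are in hand, Theorem~\ref{thm:main-negative} produces a normalized weakly null sequence $(y_n)$ in $E^*=\ell_\infty^*$ with no strongly normal subsequence, which is exactly the statement.

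\emph{Nonreflexivity.} This is immediate. The space $\ell_\infty$ is nonseparable, while its predual $\ell_1$ is separable. If $\ell_\infty$ were reflexive, then $\ell_1$ would be reflexive as well, since a Banach space is reflexive whenever its dual is. But $\ell_1$ is not reflexive, because its dual $\ell_\infty$ is nonseparable. Alternatively, $\ell_\infty$ contains the nonreflexive closed subspace $c_0$, and closed subspaces of reflexive spaces are reflexive.

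\emph{Grothendieck property.} This is the classical theorem of Grothendieck that $\ell_\infty$ (more generally $C(K)$ for $K$ extremally disconnected, here $K=\beta\N$) is a Grothendieck space. An equivalent route is through Phillips' lemma, which says that every weak-star null sequence in $ba(2^{\N})=\ell_\infty^*$ is weakly null.

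The only step with real content is the Grothendieck property of $\ell_\infty$. I would not reprove it but cite it as a standard result, consistent with how the paper cites Haydon's theorem. Given these two facts, the corollary is a direct specialization of Theorem~\ref{thm:main-negative}. The embedding $J$ there comes from Lemma~\ref{lem:L1insideHaydon}, and the sequence is $y_n=Ju_n$ with $(u_n)$ the Johnson--Maurey--Schechtman sequence of Theorem~\ref{thm:JMS}. The paper announces a more concrete realization via a free ultrafilter later; for the corollary itself, the abstract argument suffices.
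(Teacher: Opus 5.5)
Your proposal is correct and matches the paper's proof exactly: the paper simply observes that $\ell_\infty$ is a nonreflexive Grothendieck space and applies Theorem~\ref{thm:main-negative} with $E=\ell_\infty$. Your added justifications, namely nonreflexivity via separability or $c_0\subset\ell_\infty$, and the Grothendieck property via Grothendieck's theorem for $C(\beta\mathbb N)$ or Phillips' lemma, are standard and fine.
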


\begin{proof}
The space $\ell_\infty$ is a nonreflexive Grothendieck space, so the assertion
follows from Theorem~\ref{thm:main-negative}.
\end{proof}

\section{Criteria for a normal sequence to contain a strongly normal subsequence}\label{sec:criteria}
The next result isolates the exact local obstruction.  For a subsequence
$z=(z_n)$, the intersection of kernels records the part of $E$ invisible to
$z$, while the quotient $E/K(z)$ is precisely the portion detected by the
sequence.  For an arbitrary Banach space, strong normality after passage to a
subsequence turns out to be equivalent to separability of such a detected
quotient.

For a sequence $z=(z_n)\subset E^*$ put
\[
 K(z)=\bigcap_{n=1}^{\infty}\ker z_n
\]
and
\[
 Y(z)=\overline{\operatorname{lin}\{z_n:n\in\mathbb N\}}
       ^{\,\sigma(E^*,E)}.
\]
Notice that $K(z)={}^\perp Y(z)$ and hence, by the annihilator theorem,
\[
 Y(z)=K(z)^\perp\cong (E/K(z))^*.
\tag{5}
\]
Thus $E/K(z)$ is precisely the quotient of $E$ which is detected by the
functionals $z_n$.

\begin{theorem}\label{thm:criteria-general}
Let $E$ be a Banach space and let $(y_n)\subset S(E^*)$ be a normal sequence.
The following conditions are equivalent.
\begin{enumerate}
\item $(y_n)$ contains a strongly normal subsequence;
\item $(y_n)$ contains a subsequence which is a Schauder basic sequence in
      $(E^*,\sigma(E^*,E))$;
\item $(y_n)$ contains a subsequence $z=(z_n)$ for which the quotient
      $E/K(z)$ is separable;
\item $(y_n)$ contains a subsequence $z=(z_n)$ for which the unit ball of
      $Y(z)$ is metrizable in the topology induced by $\sigma(E^*,E)$.
\end{enumerate}
\end{theorem}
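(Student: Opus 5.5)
I will prove the cycle (1)$\Rightarrow$(2)$\Rightarrow$(3)$\Rightarrow$(1), together with the equivalence (3)$\Leftrightarrow$(4).

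\emph{(1)$\Rightarrow$(2).} This is exactly \cite[Theorem~1]{Sliwa2012}, i.e.\ \cite[Proposition~7.7]{Sliwa1996}. A strongly normal subsequence has a further subsequence which is Schauder basic in $(E^*,\sigma(E^*,E))$.

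\emph{(2)$\Rightarrow$(3).} Let $z$ be a weak-star basic subsequence. I repeat Steps 1--3 of the proof of Lemma~\ref{lem:wstar-basic}, which use neither the Grothendieck property nor anything beyond weak-star continuity of the coefficient functionals. Those steps give vectors $x_n\in E$ with $\varphi_n(y)=y(x_n)$ on $Y(z)$, and show that $\{q(x_n)\}$ has dense span in $E/K(z)$. Here $M={}^\perp Y(z)=K(z)$. Hence $E/K(z)$ is separable.

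\emph{(3)$\Leftrightarrow$(4).} By (5), $Y(z)$ with the induced weak-star topology is $(E/K(z))^*$ with $\sigma((E/K(z))^*,E/K(z))$. The induced norms agree because $q^*$ is an isometry. So the unit ball of $Y(z)$ is the dual ball of $E/K(z)$. It is classical that a dual ball is weak-star metrizable iff the predual is separable. For the nontrivial direction: a compact metrizable space has a countable base, and countably many basic neighborhoods of $0$ involve countably many vectors of $E/K(z)$. These vectors separate the dual points, so by Hahn--Banach their span is dense.

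\emph{(3)$\Rightarrow$(1).} This is the main step. Let $z$ be a subsequence with $F:=E/K(z)$ separable. Every $z_n$ factors as $z_n=q^*w_n$ with $w_n\in S(F^*)$, and $(w_n)$ is weak-star null in $F^*$ because $z_n(x)=w_n(qx)$. Since $F$ is separable, it is WCG. By \cite[Proposition~4]{Sliwa2012} (the positive WCG result, Problem~(P) for WCG spaces), $(w_n)$ has a strongly normal subsequence $(w_{n_k})$, meaning $G(w_{n_k})$ is dense in $F$. Then
\[
G(z_{n_k})=q^{-1}\bigl(G(w_{n_k})\bigr)\supseteq G(w_{n_k})\text{-preimage}+K(z),
\]
since $z_{n_k}(x)=w_{n_k}(qx)$. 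The preimage under a quotient map of a dense subset is dense, because $q$ is open. So $(z_{n_k})$ is a strongly normal subsequence of $(y_n)$.

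If $F$ is finite-dimensional, a normalized weak-star null sequence in $F^*$ cannot exist, so this case cannot occur for a normal sequence.

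The obstacle I anticipate is mild: checking that \cite[Proposition~4]{Sliwa2012} applies to infinite-dimensional separable spaces without extra hypotheses. If needed, I would reprove it directly for separable $F$ by a diagonal argument. Take a dense sequence $(d_j)$ in $F$. Using $w_n(d_j)\to0$, choose $n_k$ increasing with $|w_{n_k}(d_j)|<2^{-k}$ for all $j\le k$. Then $\sum_k|w_{n_k}(d_j)|<\infty$ for every $j$, so each $d_j$ lies in $G(w_{n_k})$, and $G(w_{n_k})$ is dense.
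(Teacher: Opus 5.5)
Your proof is correct. Your arguments for (1)$\Rightarrow$(2), (2)$\Rightarrow$(3) and (3)$\Leftrightarrow$(4) are the paper's. The organization differs in two places.

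First, the paper obtains (2)$\Rightarrow$(1) by citing Proposition~2 of \cite{Sliwa2012}: a normalized weak-star basic sequence is itself strongly normal. Your cycle gets this implication for free from (2)$\Rightarrow$(3)$\Rightarrow$(1). That removes one external citation. The cost is that strong normality is obtained only for a further subsequence, which is all that (1) asks.

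Second, for (3)$\Rightarrow$(1) the paper works directly in $E$. It picks $x_j$ with $\{q(x_j)\}$ dense in $E/K$ and extracts $n_k$ with $|z_{n_k}(x_j)|\le 2^{-k}$ for $j\le k$. It then checks by hand, via the quotient-norm formula, that $K+\lin\{x_j\}\subset G(z_{n_k})$ is dense. You instead descend to $F=E/K(z)$ and apply the positive result for separable (hence WCG) spaces to the normalized weak-star null sequence $(w_n)$. You then lift back through $G(z_{n_k})=q^{-1}(G(w_{n_k}))$ and the openness of $q$.

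Your fallback diagonal argument in $F$ is exactly the paper's extraction transported to the quotient. Openness of $q$ replaces the paper's explicit approximation step. The paper's route is self-contained. Yours is more modular and makes explicit that (3)$\Rightarrow$(1) is just the separable case of Problem~(P) applied to $E/K(z)$.

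Your displayed inclusion ``$\supseteq G(w_{n_k})$-preimage${}+K(z)$'' is garbled. The equality $G(z_{n_k})=q^{-1}(G(w_{n_k}))$ is all you actually use.
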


\begin{proof}
We prove the implications in the order
\[
        (1)\Longleftrightarrow(2),\qquad
        (2)\Longrightarrow(3)\Longrightarrow(1),\qquad
        (3)\Longleftrightarrow(4).
\]

\Step{1: equivalence of strong normality and weak-star basic selection.}
The equivalence of (1) and (2) is exactly the combination of two results from
\cite{Sliwa2012}.  Theorem~1 there says that every strongly normal sequence
has a weak-star Schauder basic subsequence.  Conversely, the first assertion
of Proposition~2 there says that every normalized weak-star Schauder basic
sequence is strongly normal.  Since every subsequence of our normal sequence
is still normalized, these results apply without renormalization.

\Step{2: prove (2)$\Rightarrow$(3).}
Assume that a subsequence, relabelled $(z_n)$, is a Schauder basic sequence in
$(E^*,\sigma(E^*,E))$.  Put
\[
        Y=Y(z)
        =\overline{\lin\{z_n:n\in\mathbb N\}}^{\sigma(E^*,E)}
\]
and let $(\varphi_n)$ be its coefficient functionals.  By weak-star continuity, for each $n$ there exists
$x_n\in E$ such that
\[
        \varphi_n(y)=y(x_n),\qquad y\in Y.
\tag{6}
\]
Let $K=K(z)=\bigcap_n\ker z_n$ and $q:E\to E/K$.  Since
$K={}^\perp Y$, the annihilator theorem identifies
\[
        Y=K^\perp=q^*((E/K)^*).
\]
We claim
\[
        \overline{\lin\{q(x_n):n\in\mathbb N\}}=E/K.
\tag{7}
\]
If this failed, Hahn--Banach would give a nonzero
$\eta\in(E/K)^*$ such that $\eta(q(x_n))=0$ for all $n$.  Put
$y=q^*\eta\in Y$.  Then by (6)
\[
        \varphi_n(y)=y(x_n)=\eta(q(x_n))=0
        \qquad(n\in\mathbb N).
\]
All basis coefficients of $y$ vanish, hence $y=0$.  Injectivity of $q^*$ then
gives $\eta=0$, contradiction.  Thus (7) holds.  To spell out the last step, the set of all finite
linear combinations of the vectors $q(x_n)$ with rational coefficients is
countable.  It is dense in the real linear span of the $q(x_n)$, and by (7)
that real linear span is dense in $E/K$.  Therefore $E/K$ has a countable
dense subset and is separable.

\Step{3: prove (3)$\Rightarrow$(1) by a diagonal choice.}
Assume that for a subsequence $z=(z_n)$ the quotient $E/K(z)$ is separable.
Relabel this subsequence as $(z_n)$ and write $K=K(z)$.  Choose vectors
$x_1,x_2,\ldots\in E$ so that
\[
        \{q(x_j):j\in\mathbb N\}
\]
is dense in $E/K$.

For each fixed $j$, normality gives $z_n(x_j)\to0$.  We choose recursively
$n_1<n_2<\cdots$ so that
\[
        |z_{n_k}(x_j)|\le2^{-k}
        \qquad(1\le j\le k).
\tag{8}
\]
Here is the recursion explicitly.  Choose $n_1$ so large that
$|z_{n_1}(x_1)|\le2^{-1}$.  Having chosen $n_1<\cdots<n_{k-1}$, the finitely
many scalar sequences $z_n(x_1),\ldots,z_n(x_k)$ all tend to zero.  Therefore
there exists $n_k>n_{k-1}$ for which all $k$ inequalities
$|z_{n_k}(x_j)|\le2^{-k}$ hold simultaneously.

Fix $j$.  The first $j-1$ terms of
$\sum_k|z_{n_k}(x_j)|$ form a finite number.  For $k\ge j$, (8) applies, so
\[
 \sum_{k=j}^\infty |z_{n_k}(x_j)|
 \le\sum_{k=j}^\infty2^{-k}<\infty.
\]
Hence every $x_j$ belongs to $G(z_{n_k})$.  Also every element of $K$ belongs
to $G(z_{n_k})$, because every $z_n$ vanishes identically on $K$.  Since
$G(z_{n_k})$ is a linear subspace,
\[
 K+\lin\{x_j:j\in\mathbb N\}\subset G(z_{n_k}).
\tag{9}
\]

It remains to check density of the left-hand side.  Let $x\in E$ and
$\varepsilon>0$.  Since the cosets $q(x_j)$ have dense linear span in $E/K$,
there exists $v\in\lin\{x_j\}$ such that
\[
        \|q(x-v)\|<\varepsilon.
\]
By the quotient norm formula
\[
 \|q(x-v)\|=\inf_{k\in K}\|x-v-k\|,
\]
there is $k\in K$ such that
\[
        \|x-v-k\|<2\varepsilon.
\]
(The factor $2$ is inessential: since the infimum is $<\varepsilon$, choose an
approximating $k$ within an additional error $<\varepsilon$.)  Therefore
$K+\lin\{x_j\}$ is dense in $E$.  By (9), $G(z_{n_k})$ contains a dense
subspace, hence is dense.  Thus $(z_{n_k})$ is strongly normal.

\Step{4: prove (3)$\Longleftrightarrow$(4).}
Under the canonical isometric identification
\[
        Y(z)=K(z)^\perp\cong(E/K(z))^*,
\]
the topology induced by $\sigma(E^*,E)$ on $Y(z)$ is exactly the canonical
weak-star topology
\[
        \sigma((E/K(z))^*,E/K(z)).
\]
Indeed, evaluation on $x\in E$ depends only on the coset $q(x)$ because every
functional in $K(z)^\perp$ vanishes on $K(z)$.

We use the standard metrizability criterion: the closed unit ball of $X^*$ is weak-star
metrizable if and only if $X$ is separable.  Apply this with $X=E/K(z)$.
Hence the ball of $Y(z)$ is metrizable in the induced weak-star
topology exactly when $E/K(z)$ is separable.  This proves (3)$\Leftrightarrow$(4).
\end{proof}

Equivalently, a normal sequence has no strongly normal subsequence exactly
when $E/K(z)$ is nonseparable for every subsequence $z$ of that sequence.
This is the negative form of the quotient criterion and does not require the
Grothendieck property.

The quotient condition is often easy to verify directly.  Another useful
sufficient condition is reflexivity of the closed span of a subsequence.
This implication does not require the Grothendieck property.

\begin{proposition}\label{prop:reflexive-span-sufficient}
Let $E$ be a Banach space and let $z=(z_n)\subset E^*$ be any sequence.  If
$Z=[z_n]$ is reflexive, then the quotient $E/K(z)$ is separable.  Consequently,
if $z$ is a subsequence of a normal sequence $(y_n)\subset S(E^*)$, then
$(y_n)$ has a strongly normal subsequence.
\end{proposition}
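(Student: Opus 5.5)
Since $Z=[z_n]$ is separable and reflexive, its dual is separable as well, and the plan is to use this to produce a countable subset of $E$ whose image is dense in $E/K(z)$. Write $K=K(z)$, $q:E\to E/K$, and $Y=Y(z)=K^\perp\cong(E/K)^*$ as in (5). We have $Z\subset Y$, and $Y$ is the weak-star closure of $Z$. The key observation is that a reflexive subspace of a dual space is automatically weak-star closed. Indeed, the unit ball $B_Z$ is weakly compact, hence compact in the weaker Hausdorff topology $\sigma(E^*,E)$, hence weak-star closed. Since $Z$ is a subspace, the Krein--Smulian theorem then shows that $Z$ is weak-star closed. Therefore $Z=Y$, so $Z\cong(E/K)^*$ isometrically via $q^*$.

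Next we show that $E/K$ embeds isometrically into $Z^*$. Define $R:E/K\to Z^*$ by $R(qx)(z)=z(x)$. This is well defined because elements of $Z=K^\perp$ vanish on $K$. It is isometric because
\[
\|R(qx)\|=\sup_{\eta\in B_{(E/K)^*}}|\eta(qx)|=\|qx\|.
\]
Now $Z$ is separable, being the closed span of a sequence, and reflexive, so $Z^{**}=Z$ is separable. A Banach space whose dual is separable is itself separable, so $Z^*$ is separable. Hence $E/K$, as a subspace of a separable metric space, is separable. There is a simpler alternative that avoids the first paragraph: for any $z\in Z$ the functional $qx\mapsto z(x)$ belongs to $(E/K)^*$, and evaluation of $Z$ on $E/K$ is isometric once $Z=Y$. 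Either route relies on $Z=Y$, so I will keep the Krein--Smulian step.

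For the consequence, let $z$ be a subsequence of the normal sequence $(y_n)$ with $[z_n]$ reflexive. By what we just proved, $E/K(z)$ is separable, which is condition (3) of Theorem~\ref{thm:criteria-general}. That theorem's implication (3)$\Rightarrow$(1) then yields a strongly normal subsequence of $(y_n)$.

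The main obstacle is justifying $Z=Y$, that is, that the norm-closed span is already weak-star closed. Without this step, $Z$ could be a proper norm-closed subspace of $Y$ that does not norm $E/K$, and separability of $Z^*$ would give no information about $E/K$. I would first try Krein--Smulian as above. If that is to be avoided, an alternative is the following: for a weak-star convergent bounded net in $Z$, reflexivity provides a weakly convergent subnet with limit in $Z$, and the two limits must agree; this handles bounded nets, and a Banach--Dieudonné argument finishes.
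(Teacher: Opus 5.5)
Your proof is correct and follows essentially the same route as the paper: Krein--\v{S}mulian shows that the reflexive span $Z$ is weak-star closed, so $Z=K(z)^\perp\cong(E/K(z))^*$; separability of $Z^*$ then transfers to $E/K(z)$, and condition~(3) of Theorem~\ref{thm:criteria-general} gives the consequence. The one small difference is that you embed $E/K(z)$ isometrically into $Z^*$ by the canonical map, whereas the paper first proves $E/K(z)$ reflexive to obtain $E/K(z)\cong Z^*$; your version is a slight economy, since only the embedding is needed.
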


\begin{proof}
Let $Z=[z_n]$.  The first four steps establish the quotient assertion; no
normality assumption is used there.

\Step{1: $Z$ is weak-star closed in $E^*$.}
Because $Z$ is reflexive, $rB_Z$ is weakly compact for every $r>0$.  The
inclusion map
\[
        (Z,\sigma(Z,Z^*))\longrightarrow(E^*,\sigma(E^*,E))
\]
is continuous: every weak-star evaluation $z\mapsto z(x)$ is a continuous
linear functional on $Z$.  Hence $rB_Z$ is weak-star compact in $E^*$ and
therefore weak-star closed.  Since the norm on $Z$ is inherited from $E^*$,
\[
        Z\cap rB_{E^*}=rB_Z\qquad(r>0).
\]
The Krein--\v{S}mulian theorem now applies to the convex set $Z$: a convex
subset of a dual Banach space is weak-star closed whenever all of its
intersections with closed norm balls are weak-star closed.  Thus $Z$ is
weak-star closed.

\Step{2: represent $Z$ as the dual of a quotient.}
Let $K={}^\perp Z$.  By weak-star closedness and the annihilator theorem,
\[
        Z=K^\perp\cong(E/K)^*.
\tag{10}
\]
Since $Z$ is reflexive, $(E/K)^*$ is reflexive, and therefore $E/K$ is
reflexive.

\Step{3: prove that $E/K$ is separable.}
The space $Z$ is separable because it is the norm-closed span of a sequence.
We recall the standard fact that a separable reflexive Banach space has
separable dual, and include the short argument.  Since $Z$ is separable,
$B_{Z^*}$ is weak-star compact and metrizable; because $Z$ is reflexive, this
weak-star topology is the weak topology of $Z^*$.  Choose a countable weakly
dense set $D\subset B_{Z^*}$.  Then the weak closure of the convex hull of
$D$ is all of $B_{Z^*}$.  By Hahn--Banach, the weak and norm closures of a
convex subset of a Banach space coincide.  Hence the norm-closed convex hull
of $D$ is $B_{Z^*}$.  Rational convex combinations of points of $D$ form a
countable norm-dense subset of this convex hull, so $B_{Z^*}$, and therefore
$Z^*$, is norm separable.  Since $E/K$ is reflexive and
$(E/K)^*\cong Z$, the canonical embedding gives
\[
        E/K\cong (E/K)^{**}\cong Z^*.
\]
Hence $E/K$ is separable.

\Step{4: identify $K$ with $K(z)$.}
Because $Z$ is the norm-closed span of the $z_n$,
\[
 x\in{}^\perp Z
 \quad\Longleftrightarrow\quad
 z_n(x)=0\ \text{for every }n.
\]
Thus
\[
        K={}^\perp Z=\bigcap_n\ker z_n=K(z).
\]
We have therefore shown that $E/K(z)$ is separable.  If $z$ is a subsequence
of a normal sequence $(y_n)$, condition~(3) of
Theorem~\ref{thm:criteria-general} applies and gives a strongly normal
subsequence of $(y_n)$.
\end{proof}

For Grothendieck spaces the preceding sufficient condition becomes an exact
quotient--span equivalence.

\begin{theorem}\label{thm:criteria-groth}
Let $E$ be a Grothendieck Banach space and let $z=(z_n)\subset E^*$ be any
sequence.  Then
\[
        E/K(z)\text{ is separable}
        \quad\Longleftrightarrow\quad
        [z_n]\text{ is reflexive}.
\]
Consequently, if $(y_n)\subset S(E^*)$ is normal, the four equivalent
conditions of Theorem~\ref{thm:criteria-general} are also equivalent to
\begin{enumerate}
\setcounter{enumi}{4}
\item $(y_n)$ contains a subsequence $z=(z_n)$ whose norm-closed linear span
      $[z_n]$ is reflexive.
\end{enumerate}
\end{theorem}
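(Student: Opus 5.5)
The backward implication is already done: if $[z_n]$ is reflexive, Proposition~\ref{prop:reflexive-span-sufficient} gives separability of $E/K(z)$. This direction does not use the Grothendieck property. So the work lies in the forward direction, and I would follow Steps~4--7 of the proof of Lemma~\ref{lem:wstar-basic}. The difference is that separability is now a hypothesis, not something derived from a basis.

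For the forward direction, assume $E/K(z)$ is separable and write $K=K(z)$, $q:E\to E/K$. By (5), $Y(z)=K^\perp=q^*((E/K)^*)$, and the topology induced by $\sigma(E^*,E)$ on $Y(z)$ is $\sigma((E/K)^*,E/K)$. I would then argue in three steps.

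\begin{enumerate}
\item \emph{$E/K$ is Grothendieck.} Pull weak-star null sequences back by $q^*$, use the Grothendieck property of $E$, and restrict weak convergence to the norm-closed subspace $K^\perp$. This is verbatim Step~4 of Lemma~\ref{lem:wstar-basic}.
\item \emph{$E/K$ is reflexive.} A separable Grothendieck space is reflexive, since its dual ball is weak-star metrizable, hence weakly sequentially compact, and Eberlein--\v{S}mulian applies. This is Step~5 of that lemma.
\item \emph{$Y(z)=[z_n]$.} Because $E/K$ is reflexive, the weak-star topology on $Y(z)$ coincides with its weak topology. So $Y(z)$ equals the weak closure of $\lin\{z_n\}$, which equals its norm closure $[z_n]$ by Mazur/Hahn--Banach. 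This is Steps~6--7 of that lemma.
\end{enumerate}

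Since $Y(z)\cong(E/K)^*$ is the dual of a reflexive space, $[z_n]$ is reflexive.

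The one point to check carefully is the last step. Lemma~\ref{lem:wstar-basic} used the basis only to obtain separability. The identification of the weak-star closed span with the norm-closed span needs only reflexivity of $E/K$, so it should transfer without change. I would also record that no normalization or normality of $(z_n)$ is used, in keeping with the statement's ``any sequence''.

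For the consequence, take a normal $(y_n)\subset S(E^*)$. Condition (5) implies condition (3) of Theorem~\ref{thm:criteria-general} by the backward implication applied to the same subsequence. Conversely, (3) implies (5) by the forward implication, again with the same subsequence $z$. Hence (5) joins the four equivalent conditions.
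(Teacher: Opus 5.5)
Your proposal is correct and follows the paper's proof. The forward direction uses Steps~4--5 of Lemma~\ref{lem:wstar-basic} to show that $E/K(z)$ is separable and Grothendieck, hence reflexive; the backward direction is Proposition~\ref{prop:reflexive-span-sufficient}; and the consequence comes from matching with condition~(3) of Theorem~\ref{thm:criteria-general}. The only difference is that you re-run Steps~6--7 to identify $[z_n]$ with $Y(z)=K(z)^\perp$, whereas the paper simply notes that $[z_n]$ is a closed subspace of the reflexive space $K(z)^\perp\cong(E/K(z))^*$. That observation already suffices, and your extra identification is valid but unnecessary.
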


\begin{proof}
Assume first that $E/K(z)$ is separable.  Since quotients of Grothendieck
spaces are Grothendieck (as proved in Step~4 of Lemma~\ref{lem:wstar-basic}),
$E/K(z)$ is a separable Grothendieck space and hence reflexive.  Therefore
\[
        K(z)^\perp\cong(E/K(z))^*
\]
is reflexive.  The space $[z_n]$ is a closed subspace of $K(z)^\perp$, so it
is reflexive as well.

Conversely, if $[z_n]$ is reflexive, Proposition~\ref{prop:reflexive-span-sufficient}
shows directly that $E/K(z)$ is separable.  This proves the displayed
equivalence.  The final assertion now follows by combining it with
condition~(3) of Theorem~\ref{thm:criteria-general}.
\end{proof}

\begin{corollary}\label{cor:criteria-groth-obstruction}
Let $E$ be a Grothendieck Banach space and let $(y_n)\subset S(E^*)$ be
normal.  Then $(y_n)$ has no strongly normal subsequence if and only if
\[
 [y_{n_k}]\quad\text{is nonreflexive for every subsequence }(y_{n_k}).
\tag{11}
\]
\end{corollary}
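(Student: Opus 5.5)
The corollary is essentially the contrapositive of the equivalence (1)$\Leftrightarrow$(5) in Theorem~\ref{thm:criteria-groth}, applied to the given normal sequence $(y_n)\subset S(E^*)$. I would prove the two directions separately, each through the quotient criterion (3) of Theorem~\ref{thm:criteria-general} together with the span--quotient equivalence of Theorem~\ref{thm:criteria-groth}.

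\emph{Suppose (11) holds.} We must show that no subsequence of $(y_n)$ is strongly normal. Assume, to the contrary, that some subsequence is strongly normal. By Theorem~\ref{thm:criteria-general}, (1)$\Rightarrow$(3), there is a subsequence $z=(y_{n_k})$ with $E/K(z)$ separable. Since $E$ is Grothendieck, Theorem~\ref{thm:criteria-groth} then shows that $[y_{n_k}]$ is reflexive. This contradicts (11).

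Alternatively, one can argue directly, as in Theorem~\ref{thm:main-negative}. Corollary~\ref{cor:strong-reflexive} applied to the strongly normal subsequence yields a further subsequence with reflexive span. That further subsequence is still a subsequence of $(y_n)$, so again (11) is violated.

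\emph{Suppose $(y_n)$ has no strongly normal subsequence.} Assume (11) fails, so that some subsequence $z=(y_{n_k})$ has $[y_{n_k}]$ reflexive. Proposition~\ref{prop:reflexive-span-sufficient}, which does not even need the Grothendieck property, shows that $E/K(z)$ is separable. Then (3)$\Rightarrow$(1) of Theorem~\ref{thm:criteria-general} produces a strongly normal subsequence of $(y_n)$, which is a contradiction.

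The argument is essentially bookkeeping, and I expect no real obstacle. The only point needing care is that ``subsequence of a subsequence is a subsequence''. This is what lets the existential statement in condition (5) of Theorem~\ref{thm:criteria-groth} match the universal quantifier in (11), so the negations align correctly.
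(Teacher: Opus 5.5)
Your proposal is correct and matches the paper's proof, which simply notes that the corollary is the negation of the equivalence (1)$\Leftrightarrow$(5) in Theorem~\ref{thm:criteria-groth}. Your two directions just unpack that equivalence through condition~(3) of Theorem~\ref{thm:criteria-general} and Proposition~\ref{prop:reflexive-span-sufficient}.
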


\begin{proof}
This is the negation of condition~(5) in
Theorem~\ref{thm:criteria-groth}, together with the equivalence of
conditions~(1) and~(5).
\end{proof}

In particular, Corollary~\ref{cor:criteria-groth-obstruction} identifies the
precise obstruction exhibited by the Johnson--Maurey--Schechtman sequence in
our construction: every subsequence has nonreflexive closed linear span.  The
Grothendieck property alone turns every normal sequence into a weakly null
sequence in $E^*$, but weak nullity by itself is not sufficient.  What matters
is whether some subsequence eventually lives in a reflexive closed span, or,
equivalently, whether it detects only a separable quotient of $E$.

\begin{corollary}\label{cor:lp-sufficient}
Let $E$ be a Banach space and let $(y_n)\subset S(E^*)$ be normal.  If some
subsequence of $(y_n)$ is equivalent to the unit vector basis of $\ell_p$ for
some $1<p<\infty$, then $(y_n)$ has a strongly normal subsequence.
More generally, the same conclusion holds whenever some subsequence has
reflexive norm-closed linear span.
\end{corollary}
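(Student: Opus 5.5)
The statement is a direct consequence of Proposition~\ref{prop:reflexive-span-sufficient}, so I plan to reduce both assertions to it.

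For the general assertion, suppose some subsequence $z=(z_n)=(y_{n_k})$ has reflexive norm-closed span $[z_n]$. Proposition~\ref{prop:reflexive-span-sufficient} then says that $E/K(z)$ is separable. Because $z$ is a subsequence of the normal sequence $(y_n)$, condition~(3) of Theorem~\ref{thm:criteria-general} holds. By the implication (3)$\Rightarrow$(1) proved there, $(y_n)$ has a strongly normal subsequence. This argument does not use the Grothendieck property.

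For the $\ell_p$ case, suppose a subsequence $(y_{n_k})$ is equivalent to the unit vector basis $(e_k)$ of $\ell_p$ with $1<p<\infty$. The map $e_k\mapsto y_{n_k}$ then extends to an isomorphism of $\ell_p$ onto the closed span $[y_{n_k}]$. This holds because equivalence gives two-sided norm estimates on finite linear combinations, and these estimates pass to norm limits. Since $\ell_p$ is reflexive for $1<p<\infty$, and reflexivity is preserved under isomorphism, $[y_{n_k}]$ is reflexive. The general assertion then applies.

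There is no real obstacle here. The only point that needs care is that equivalence of basic sequences yields an isomorphism of the closed spans and not just of the linear spans. This is the same observation already used in Step~4 of Theorem~\ref{thm:main-negative}.
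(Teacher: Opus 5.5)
Your proposal is correct and follows the paper's route. The paper likewise notes that the closed span of a sequence equivalent to the $\ell_p$ unit vector basis is isomorphic to the reflexive space $\ell_p$, and then applies Proposition~\ref{prop:reflexive-span-sufficient}; that proposition's final assertion already contains the passage through condition~(3) of Theorem~\ref{thm:criteria-general} that you spell out explicitly.
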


\begin{proof}
The closed linear span of a sequence equivalent to the unit vector basis of
$\ell_p$, $1<p<\infty$, is isomorphic to the reflexive space $\ell_p$.
Apply Proposition~\ref{prop:reflexive-span-sufficient}.
\end{proof}

Thus, for an individual normal sequence, Theorem~\ref{thm:criteria-general}
reduces the subsequence problem to separability of a quotient determined by
that sequence.  Inside the class of Grothendieck spaces,
Theorem~\ref{thm:criteria-groth} sharpens this further to a purely geometric
criterion: the existence of a strongly normal subsequence is equivalent to
the existence of a subsequence with reflexive closed linear span.

\section{A concrete Johnson--Maurey--Schechtman sequence}\label{sec:concreteJMS}

This section does not introduce a new Johnson--Maurey--Schechtman type
construction.  Its purpose is to make explicit a choice of the parameters and
random variables left flexible in the proof of \cite[Theorem~1]{JMS2007}, so
that the realizations in the next two sections can be written by concrete
formulas.  For those realizations it is convenient to record the resulting
sequence in detail.  Let $\mathcal A$ be the algebra generated by the dyadic
subintervals of $(0,1)$, taken half-open in the usual way; endpoint choices are
irrelevant modulo null sets.  Enumerate its nonempty members as $(D_m)_{m\ge1}$ by increasing
smallest dyadic level and, within each level, lexicographically by incidence
vectors, omitting repetitions and taking $D_1=(0,1)$.  For $n\ge1$ let
$\nu_2(n)$ be the exponent of $2$ in $n$ and set
\[
 E_n=D_{\nu_2(n)+1}.
\]
Then every nonempty member of $\mathcal A$ occurs infinitely often in
$(E_n)$.  We construct powers of two $k_n$, coefficients $a_n=k_n^{3/4}$,
and three-valued dyadic functions $h_{i,n}$.

Set $k_1=16$ and $a_1=8$.  Suppose that $k_r,a_r$ have been chosen for
$1\le r\le N$ and put
\[
 M_N=\sum_{r=1}^{N}a_r
       \left(\frac{\lambda(E_r)}{k_r}\right)^{1/2}.
\tag{12}
\]
Choose $k_{N+1}$ to be the least power of two larger than $k_N$ for which,
with $a_{N+1}=k_{N+1}^{3/4}$,
\[
 2^N k_{N+1}^{1/2}M_N
 \le a_{N+1}\lambda(E_{N+1})
\tag{13}
\]
and
\[
 \frac{a_{N+1}\lambda(E_{N+1})}{k_{N+1}}
 \le
 \min\left\{
 \frac{\min_{1\le r\le N}\lambda(E_r)}{2^{N+1}k_N},
 2^{-N-4}
 \right\}.
\tag{14}
\]
Such a power exists because the right-hand side of (13) grows like
$k_{N+1}^{3/4}$ whereas its left-hand side grows like $k_{N+1}^{1/2}$, and
the left-hand side of (14) is
$\lambda(E_{N+1})k_{N+1}^{-1/4}$.  Condition (13) is the analogue of inequality (7) in the proof of
\cite[Theorem~1]{JMS2007}, while the first inequality in (14) is the
analogue of inequality (8) there.  The second inequality in (14) is an
additional summability condition.  The recursion is explicit in the sense
that every parameter is uniquely specified by the preceding data; it is not
intended as a numerically efficient construction, and the numbers $k_n$ may
grow extremely rapidly.

We next define the functions $h_{i,n}$ without any probabilistic choice.
Let $r_n$ be the least integer for which membership in $E_n$ is determined
by the first $r_n$ binary digits.  Write $k_n=2^{q_n}$.  For
$t\in[0,1)$ use the binary expansion
$t=.\varepsilon_1(t)\varepsilon_2(t)\cdots$ which is not eventually equal
to $1$ (so dyadic rationals use their terminating expansion); the value at
$t=1$ may be fixed arbitrarily.  Put
$b_{i,n}=r_n+(i-1)(q_n+1)$.  Define $h_{i,n}(t)=0$ for $t\notin E_n$, while
for $t\in E_n$ set
\[
 h_{i,n}(t)=
 \begin{cases}
  1,&\varepsilon_{b_{i,n}+1}=\cdots=\varepsilon_{b_{i,n}+q_n}=0
      \text{ and }\varepsilon_{b_{i,n}+q_n+1}=0,\\
 -1,&\varepsilon_{b_{i,n}+1}=\cdots=\varepsilon_{b_{i,n}+q_n}=0
      \text{ and }\varepsilon_{b_{i,n}+q_n+1}=1,\\
  0,&\text{otherwise}.
 \end{cases}
\tag{15}
\]
For fixed $n$, the functions $(h_{i,n})_{i\ge1}$ are independent with
respect to normalized Lebesgue measure on $E_n$, have mean zero, and satisfy
\[
 \|h_{i,n}\|_1=\frac{\lambda(E_n)}{k_n}.
\tag{16}
\]
For completeness, let us record the estimate that will play the role of
inequality~(6) in \cite{JMS2007}.  If $\sigma\subset\mathbb N$ is finite,
then for each fixed $r$, Cauchy--Schwarz on $[0,1]$ gives
\[
 \left\|\sum_{i\in\sigma}h_{i,r}\right\|_1
 \le
 \left(\int_0^1\Bigl(\sum_{i\in\sigma}h_{i,r}\Bigr)^2\,d\lambda\right)^{1/2}.
\]
The cross terms vanish because the $h_{i,r}$ are independent on $E_r$ and
have mean zero, whereas
$\int h_{i,r}^2\,d\lambda=\lambda(E_r)/k_r$.  Hence
\[
 \left\|\sum_{i\in\sigma}h_{i,r}\right\|_1
 \le
 \left(\frac{|\sigma|\lambda(E_r)}{k_r}\right)^{1/2}.
\]
Multiplying by $a_r$ and summing over $1\le r\le N$ yields
\[
 \sum_{r=1}^{N}
 \left\|\sum_{i\in\sigma}a_rh_{i,r}\right\|_1
 \le |\sigma|^{1/2}M_N.
\tag{17}
\]

The functions $h_{i,n}$ satisfy precisely the structural requirements
(i)--(v) imposed in the proof of \cite[Theorem~1]{JMS2007}: their supports lie
in $E_n$, they have mean zero, their supports have relative measure $1/k_n$,
they are measurable with respect to the dyadic algebra, and, for each fixed
$n$, they are independent on $E_n$.  Estimate~(17) substitutes for
inequality~(6) in that proof, (13) substitutes for (7), and the first
inequality in (14) substitutes for (8).

Let us make explicit the point at which these estimates enter Claim~1 of
\cite{JMS2007}.  In that proof one chooses indices $N_j$ with
$E_{N_j}=F_j$ and finite sets $\sigma_j$ of cardinality
$|\sigma_j|=\varepsilon k_{N_j}$, where $\varepsilon^{-1}$ is a power of two.
Conditions (i)--(v), together with Lemma~1 of \cite{JMS2007}, produce the
sets $A_j,B_j$ which approximate
$\sum_{i\in\sigma_j}h_{i,N_j}$.  The remaining step is to replace this sum by
a linear combination of the $f_i$'s.  Precisely here \cite{JMS2007} invokes
its inequalities (6)--(8).  In our notation, (17), (13), and the first
inequality in (14) give
\[
 \left\|\sum_{i\in\sigma_j}
 \bigl(a_{N_j}^{-1}f_i-h_{i,N_j}\bigr)\right\|_1
 \le
 2^{-N_j+1}\varepsilon^{1/2}\lambda(E_{N_j})
 +\varepsilon 2^{-N_j}\lambda(E_{N_j}),
\]
where for the tail we use, for $r>N_j$,
\[
 \frac{a_r\lambda(E_r)}{k_r}
 \le \frac{\lambda(E_{N_j})}{2^r k_{N_j}}.
\]
Since every nonempty member of $\mathcal A$ occurs infinitely often among the
$E_n$, the indices $N_j$ may be chosen as large as required; and because all
$k_n$ are powers of two and tend to infinity, $\varepsilon k_{N_j}$ is an
integer once $k_{N_j}\ge\varepsilon^{-1}$.  Thus the error
estimate used in Claim~1 is reproduced exactly, and the rest of that proof,
as well as the subsequent small-perturbation construction, goes through
unchanged.  The extra bound $2^{-N-4}$ in (14) only forces additional growth
of $k_{N+1}$; it does not weaken any of the hypotheses used there and will be
used below only to make the defining series uniformly summable.

Define
\[
 f_i=\sum_{n=1}^{\infty}a_nh_{i,n}.
\tag{18}
\]
The series converges in $L_1$, uniformly in $i$, since (14) gives
\[
 \sum_{n=2}^{\infty}a_n\|h_{i,n}\|_1
 \le\sum_{n=2}^{\infty}2^{-n-3}=\frac1{16}.
\]
Also $a_1\|h_{i,1}\|_1=1/2$, and hence
\[
 \frac7{16}\le\|f_i\|_1\le\frac9{16},\qquad i\in\mathbb N.
\tag{19}
\]
For each fixed $n$, $h_{i,n}\to0$ weakly in $L_1$ as $i\to\infty$.
Indeed, let $\mathcal F_m$ denote the $\sigma$-algebra generated by the first
$m$ binary digits.  Since $E_n$ is $\mathcal F_{r_n}$-measurable and the
binary block used in $h_{i,n}$ begins after $b_{i,n}$, we have
\[
 \mathbb E(h_{i,n}\mid\mathcal F_{b_{i,n}})=0.
\]
Hence, for every $x\in L_\infty[0,1]$,
\[
 \left|\int_0^1 xh_{i,n}\,d\lambda\right|
 =\left|\int_0^1
   \bigl(x-\mathbb E(x\mid\mathcal F_{b_{i,n}})\bigr)h_{i,n}\,d\lambda\right|
 \le
 \left\|x-\mathbb E(x\mid\mathcal F_{b_{i,n}})\right\|_1.
\]
As $b_{i,n}\to\infty$ and the dyadic conditional expectations converge to
$x$ in $L_1$, the last expression tends to zero.  Thus $h_{i,n}\to0$ weakly.
To pass to $f_i$, fix $x\in L_\infty[0,1]$ and $\varepsilon>0$.  By the uniform
summability choose $N$ so that
$\|x\|_\infty\sum_{n>N}a_n\|h_{i,n}\|_1<\varepsilon$ for every $i$; the
integral against the finite sum over $n\le N$ tends to zero as $i\to\infty$.
Hence $f_i\to0$ weakly in $L_1$.

We may therefore invoke Claim~1 and the subsequent small-perturbation
construction in the proof of \cite[Theorem~1]{JMS2007}.  Thus, for every
$\varepsilon>0$, the Haar basis is $(1+\varepsilon)$-equivalent to a block
basis of every subsequence of $(f_i)$.
Consequently $[f_{i_k}]$ contains a copy of $L_1[0,1]$ for every subsequence
$(f_{i_k})$.
Finally put
\[
 g_i=\frac{f_i}{\|f_i\|_1}.
\tag{20}
\]

\begin{proposition}\label{prop:concrete-JMS}
The sequence $(g_i)$ defined by \emph{(20)} is normalized and weakly null in
$L_1[0,1]$.  For every $\varepsilon>0$, the Haar basis of $L_1[0,1]$ is
$(1+\varepsilon)$-equivalent to a block basis of every subsequence of
$(g_i)$.  Consequently, for every subsequence $(g_{i_k})$, the space
$[g_{i_k}]$ contains an isomorphic copy of $L_1[0,1]$ and is nonreflexive.
\end{proposition}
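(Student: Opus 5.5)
The plan is to read off each assertion of Proposition~\ref{prop:concrete-JMS} from the properties of $(f_i)$ already established above, and then to check that dividing by $\|f_i\|_1$ changes none of them.

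\emph{Normalization and weak nullity.} Normalization holds by definition (20). The bound (19) gives $\|f_i\|_1\ge 7/16>0$, so each $g_i$ is well defined. It also gives $1/\|f_i\|_1\le 16/7$, so the scalars $c_i=\|f_i\|_1^{-1}$ lie in $[16/9,16/7]$. For $x\in L_\infty[0,1]$ we then have
\[
\left|\int x g_i\,d\lambda\right|\le \tfrac{16}{7}\left|\int x f_i\,d\lambda\right|\to0,
\]
because $f_i\to0$ weakly, as shown just before (18)--(20). Hence $(g_i)$ is weakly null.

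\emph{Haar block basis.} Fix a subsequence $(g_{i_k})$ and $\varepsilon>0$. The text preceding the proposition, obtained by invoking Claim~1 and the perturbation argument of \cite[Theorem~1]{JMS2007} with the hypotheses (13), (14), (17), provides a block basis $(v_m)$ of $(f_{i_k})$ that is $(1+\varepsilon)$-equivalent to the Haar basis. Write
\[
v_m=\sum_{k\in I_m}\beta_k f_{i_k},
\]
where the $I_m$ are successive finite intervals. Then
\[
v_m=\sum_{k\in I_m}\beta_k\|f_{i_k}\|_1\, g_{i_k},
\]
so the same vectors $v_m$ also form a block basis of $(g_{i_k})$. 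Being a block basis depends only on the support intervals, not on the coefficients, so the rescaling has no effect here. This step is essentially trivial.

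\emph{Consequences.} The closed span of $(v_m)$ lies in $[g_{i_k}]$ and is isomorphic to $[\text{Haar}]=L_1[0,1]$, since the Haar basis is a basis of $L_1[0,1]$. Hence $[g_{i_k}]$ contains a copy of $L_1[0,1]$. That copy is nonreflexive, for example because $L_1[0,1]$ contains $\ell_1$. A closed subspace of a reflexive space is reflexive, so $[g_{i_k}]$ is nonreflexive.

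\emph{Main obstacle.} The real difficulty is the justification, already sketched before the statement, that the explicit dyadic data satisfy the hypotheses (i)--(v) and inequalities (6)--(8) of \cite{JMS2007}, so that their Claim~1 applies. In this proof I will simply cite that verification. The only point that needs care is that the block basis is taken inside an arbitrary subsequence, and this is exactly what the JMS conclusion quantifies over.
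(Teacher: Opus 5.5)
Your proposal is correct and follows essentially the same route as the paper. The bound (19) makes the normalizing factors bounded, so weak nullity passes from $(f_i)$ to $(g_i)$; block vectors of $(f_{i_k})$ become blocks of $(g_{i_k})$ once the factors $\|f_{i_k}\|_1$ are absorbed into the coefficients; and the closed span of a sequence equivalent to the Haar basis is a copy of $L_1[0,1]$, with all the substantive work deferred, as in the paper, to the preceding verification that the dyadic data meet the hypotheses of Claim~1 in \cite{JMS2007}.
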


\begin{proof}
By (19), the normalization factors $\|f_i\|_1^{-1}$ are bounded, so the weak
nullity of $(f_i)$ implies the weak nullity of $(g_i)$, and of course
$\|g_i\|_1=1$.  Moreover $[g_{i_k}]=[f_{i_k}]$ for every subsequence.  The
Haar block-basis conclusion established above for $(f_i)$ passes to $(g_i)$:
a block vector of the $f_i$'s is the same vector after absorbing the factors
$\|f_i\|_1$ into its block coefficients.  The final assertion follows
because the closed span of a sequence equivalent to the Haar basis is
isomorphic to $L_1[0,1]$.
\end{proof}

\section{Explicit realizations for nonatomic measure spaces}\label{sec:Linfty}
Let $(\Omega,\Sigma,\mu)$ be a nonatomic probability space.  The interval
$[0,1]$ plays no essential role in the preceding construction.  Indeed, the
nonatomicity of $\mu$ allows us to reproduce the dyadic structure of
$[0,1]$ inside $\Omega$.

Choose recursively measurable sets
\[
 A_s\in\Sigma,\qquad s\in\{0,1\}^{<\mathbb N},
\]
so that, modulo null sets,
\[
 A_{\varnothing}=\Omega,\qquad
 A_s=A_{s0}\,\dot\cup\,A_{s1},\qquad
 \mu(A_s)=2^{-|s|}.
\tag{21}
\]
Such a family exists because every measurable set of positive measure in a
nonatomic finite measure space can be divided into two measurable subsets of
equal measure.  Let $I_s\subset[0,1]$ be the dyadic interval corresponding to
the binary word $s$.

On dyadic simple functions define
\[
 U\left(\sum_{|s|=m}c_s\mathbf 1_{I_s}\right)
   =\sum_{|s|=m}c_s\mathbf 1_{A_s}.
\tag{22}
\]
The definition is consistent when a function is written at a finer dyadic
level, and
\[
 \left\|U\left(\sum_{|s|=m}c_s\mathbf 1_{I_s}\right)\right\|_{L_1(\mu)}
 =\sum_{|s|=m}|c_s|\,2^{-m}
 =\left\|\sum_{|s|=m}c_s\mathbf 1_{I_s}\right\|_{L_1[0,1]}.
\]
Since dyadic simple functions are dense in $L_1[0,1]$, $U$ extends uniquely
to a linear isometry
\[
 U:L_1[0,1]\longrightarrow L_1(\mu).
\tag{23}
\]
Equivalently, the splitting (21) defines binary digits
$\varepsilon_n(\omega)$ almost everywhere and hence a measurable map
\[
 \pi(\omega)=\sum_{n=1}^{\infty}\frac{\varepsilon_n(\omega)}{2^n}.
\]
For every dyadic interval $I_s$ one has
$\pi^{-1}(I_s)=A_s$ modulo a null set.  The preimage of each dyadic endpoint is null, since it
is contained in a decreasing sequence of sets of measures $2^{-m}$.  Hence
$\mu(\pi^{-1}(I_s))=2^{-|s|}=\lambda(I_s)$.  Since the dyadic intervals
form a $\pi$-system generating the Borel $\sigma$-algebra, it follows that
$\pi_\#\mu=\lambda$.  Therefore $Uf=f\circ\pi$ in $L_1(\mu)$.  Thus the densities used below can be
written concretely as $\widetilde g_i=g_i\circ\pi$.

The space $L_\infty(\mu)$ is a Grothendieck space.  Passing to the completion
of the measure does not change the relevant $L_p$ spaces.  A finite measure
space is localizable, so the complex space $L_\infty(\mu;\mathbb C)$ is a
commutative von Neumann algebra; Pfitzner's \cite[Corollary~7]{Pfitzner1994}
states that von Neumann algebras are Grothendieck.  The corresponding real $L_\infty(\mu)$ is Grothendieck as well: the
underlying real space of a complex Grothendieck space is Grothendieck, and
complemented real subspaces inherit the Grothendieck property.  We also have
the canonical linear isometry
\[
 J_\mu:L_1(\mu)\longrightarrow L_\infty(\mu)^*,\qquad
 (J_\mu f)(x)=\int_\Omega x(\omega)f(\omega)\,d\mu(\omega).
\tag{24}
\]
Indeed, $\|J_\mu f\|\leq\|f\|_1$, while equality follows by taking
$x=\operatorname{sgn}f$.

\begin{theorem}\label{thm:explicit-Linfty}
Let $(g_i)$ be the normalized Johnson--Maurey--Schechtman sequence defined by
\emph{(20)}, put
\[
 \widetilde g_i=Ug_i\in L_1(\mu),
\]
and define
\[
 \boxed{
 y_i(x)=\int_\Omega x(\omega)\widetilde g_i(\omega)\,d\mu(\omega),
 \qquad x\in L_\infty(\mu).}
\tag{25}
\]
Then $(y_i)\subset S(L_\infty(\mu)^*)$ is weakly null, hence normal, and no
subsequence of $(y_i)$ is strongly normal.
\end{theorem}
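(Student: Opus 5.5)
The plan is to reproduce the proof of Theorem~\ref{thm:main-negative} with the abstract embedding from Haydon's theorem replaced by the concrete isometry $J_\mu\circ U:L_1[0,1]\to L_\infty(\mu)^*$. First, (25) gives $y_i=J_\mu U g_i$. Both $U$ (from (23)) and $J_\mu$ (from (24)) are linear isometries, so $\|y_i\|=\|g_i\|_1=1$. Weak nullity then follows exactly as in Step~2 of Theorem~\ref{thm:main-negative}. For $F\in L_\infty(\mu)^{**}$, the functional $(J_\mu U)^*F$ lies in $L_1[0,1]^*$, and $F(y_i)=((J_\mu U)^*F)(g_i)\to0$ because $(g_i)$ is weakly null by Proposition~\ref{prop:concrete-JMS}. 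Since weak convergence implies weak-star convergence, $(y_i)$ is normal.

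Next I rule out strongly normal subsequences through Corollary~\ref{cor:criteria-groth-obstruction}, which needs $L_\infty(\mu)$ to be Grothendieck. The paper has just established this via Pfitzner's theorem and the passage from complex to real scalars. Fix a subsequence $(y_{i_k})$. As in Step~4 of Theorem~\ref{thm:main-negative}, an isometry carries closed spans to closed spans, so
\[
[y_{i_k}]=J_\mu U\bigl([g_{i_k}]\bigr).
\]
By Proposition~\ref{prop:concrete-JMS}, $[g_{i_k}]$ contains an isomorphic copy of $L_1[0,1]$ and is therefore nonreflexive. Its isometric image $[y_{i_k}]$ is then nonreflexive as well. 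This gives condition (11) for every subsequence, and Corollary~\ref{cor:criteria-groth-obstruction} shows that no subsequence is strongly normal.

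An alternative route that avoids the corollary runs as follows. Suppose some subsequence were strongly normal. Corollary~\ref{cor:strong-reflexive} would give a further subsequence with reflexive closed span, which contradicts the nonreflexivity just shown.

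Most of this is routine. The one step needing care is the Grothendieck property of the real space $L_\infty(\mu)$ for an arbitrary nonatomic probability space. This includes the reduction to the completion, the localizability needed to view $L_\infty(\mu;\mathbb C)$ as a von Neumann algebra, and the transfer from the complex to the real case. Since the paper states this immediately before the theorem, I will cite that discussion rather than reprove it. I will also record explicitly that $U$ is isometric on all of $L_1[0,1]$ and not only on dyadic simple functions, since the closed-span identity depends on this.
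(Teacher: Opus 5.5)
Your proposal is correct and essentially coincides with the paper's proof. The paper follows exactly your ``alternative route'': it writes $y_i=J_\mu Ug_i$, gets weak nullity from the boundedness of $J_\mu U$, and refutes a strongly normal subsequence using Corollary~\ref{cor:strong-reflexive} together with $[y_{i_{k_l}}]=(J_\mu U)([g_{i_{k_l}}])$ and the nonreflexivity from Proposition~\ref{prop:concrete-JMS}. Your primary route through Corollary~\ref{cor:criteria-groth-obstruction} is also valid, but it only repackages the same ingredients (weak-star basic selection, and the fact that separable Grothendieck quotients are reflexive).
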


\begin{proof}
By (23) and (24), $y_i=J_\mu Ug_i$ and $\|y_i\|=1$.  Since $U$ and $J_\mu$
are bounded linear operators and $g_i\to0$ weakly in $L_1[0,1]$, we have
$Ug_i\to0$ weakly in $L_1(\mu)$ and therefore $y_i\to0$ weakly in
$L_\infty(\mu)^*$.  In particular, $(y_i)$ is normal.

Suppose that some subsequence $(y_{i_k})$ were strongly normal.  By
Corollary~\ref{cor:strong-reflexive}, it would have a further subsequence
$(y_{i_{k_l}})$ whose norm-closed linear span is reflexive.  On the other
hand, $J_\mu U$ is an isometry, and hence
\[
 [y_{i_{k_l}}]=(J_\mu U)([g_{i_{k_l}}]).
\]
By Proposition~\ref{prop:concrete-JMS}, $[g_{i_{k_l}}]$ contains a copy of
$L_1[0,1]$ and is nonreflexive.  Its isometric image is therefore
nonreflexive, a contradiction.
\end{proof}

Equivalently, every $y_i$ is represented by the countably additive signed
measure
\[
 d\nu_i=\widetilde g_i\,d\mu.
\tag{26}
\]
Thus no abstract embedding theorem and no ultrafilter are needed.  The only
choice is a dyadic splitting (21) of the nonatomic measure space.  With the
map $\pi$ above, formula (25) may equivalently be written
\[
 y_i(x)=\int_\Omega x(\omega)g_i(\pi(\omega))\,d\mu(\omega).
\]  For
$\Omega=[0,1]$ with Lebesgue measure and the usual dyadic intervals, one may
take $U$ to be the identity; then (25) reduces to the particularly simple
formula
\[
 y_i(x)=\int_0^1x(t)g_i(t)\,dt.
\]

\begin{corollary}\label{cor:sigmafinite}
Let $(\Omega,\Sigma,\mu)$ be a $\sigma$-finite measure space.  Assume that
there exists $A\in\Sigma$ such that $0<\mu(A)<\infty$ and the restricted
measure $\mu|_A$ is nonatomic.  Then $L_\infty(\mu)^*$ contains a normalized
weakly null sequence with no strongly normal subsequence.
\end{corollary}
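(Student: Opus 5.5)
The plan is to reduce to Theorem~\ref{thm:explicit-Linfty} applied to the nonatomic probability space obtained by normalizing $\mu|_A$, and then to transport the resulting sequence into $L_\infty(\mu)^*$ through the restriction map.

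Put $\mu_A=\mu(A)^{-1}\mu|_A$ on $(A,\Sigma_A)$, where $\Sigma_A=\{B\cap A:B\in\Sigma\}$. This is a nonatomic probability space. Consider the restriction operator $R:L_\infty(\mu)\to L_\infty(\mu_A)$, $Rx=x|_A$. The null sets of $\mu_A$ and of $\mu|_A$ coincide, so $R$ is well defined, linear and of norm at most one. It is also a surjection with a norm-one right inverse, namely extension by zero $S:L_\infty(\mu_A)\to L_\infty(\mu)$. Hence $RS=\mathrm{id}$, and consequently $R^*:L_\infty(\mu_A)^*\to L_\infty(\mu)^*$ is an isometric embedding. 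To see this, note that $\|R^*\varphi\|\le\|\varphi\|$, and that for every $w$ in the unit ball we have $\varphi(w)=\varphi(RSw)=(R^*\varphi)(Sw)$ with $\|Sw\|\le1$, which gives the reverse inequality.

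Let $(y_i)$ be the sequence in $L_\infty(\mu_A)^*$ given by Theorem~\ref{thm:explicit-Linfty}, and set $w_i=R^*y_i$. Then $\|w_i\|=1$. Moreover $w_i\to0$ weakly, because bounded operators preserve weak convergence. Concretely, $w_i(x)=\mu(A)^{-1}\int_A x\,\widetilde g_i\,d\mu$.

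It remains to rule out strongly normal subsequences. The quickest route avoids any Grothendieck argument for $L_\infty(\mu)$ itself and uses instead Theorem~\ref{thm:criteria-general}(3) together with the identification $K(z)$ for $z=(w_{i_k})$. We have $x\in K(z)$ if and only if $Rx\in K(y_{i_k})$. Since $R$ is surjective, it induces an isomorphism
\[
 L_\infty(\mu)/K(z)\cong L_\infty(\mu_A)/K\bigl((y_{i_k})\bigr).
\]
One direction is surjectivity, and the kernel of $x\mapsto q(Rx)$ is exactly $K(z)$; the open mapping theorem then gives the isomorphism. Therefore separability of one quotient is equivalent to separability of the other. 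Theorem~\ref{thm:explicit-Linfty} asserts that $(y_i)$ has no strongly normal subsequence, so by Theorem~\ref{thm:criteria-general} every quotient $L_\infty(\mu_A)/K((y_{i_k}))$ is nonseparable. Hence every quotient $L_\infty(\mu)/K((w_{i_k}))$ is nonseparable, and $(w_i)$ has no strongly normal subsequence.

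An alternative route would show that $L_\infty(\mu)$ is Grothendieck, using $\sigma$-finiteness, Pfitzner and the same remarks as above, and then argue directly via Corollary~\ref{cor:strong-reflexive} and the isometry $R^*J_{\mu_A}U$. I would mention this only as a remark.

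The main point needing care is the $\sigma$-finiteness hypothesis. In the route above it is used only to make $L_\infty(\mu)$ the usual space, so that restriction and extension by zero are well defined. The quotient argument needs nothing more, so I expect no real obstacle beyond verifying that $R$ is surjective and that the kernels correspond.
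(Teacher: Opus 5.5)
Your proof is correct, and the sequence is the same as the paper's: your $w_i=R^*y_i$, i.e.\ $w_i(x)=\mu(A)^{-1}\int_A x\,\widetilde g_i\,d\mu$, is exactly the paper's $J_\mu T\widetilde g_i$. Where you genuinely differ is in excluding strongly normal subsequences.

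The paper first shows that $L_\infty(\mu)$ itself is Grothendieck: a $\sigma$-finite measure is localizable, so Pfitzner's theorem on von Neumann algebras applies. It then reruns the proof of Theorem~\ref{thm:explicit-Linfty}. A strongly normal subsequence would, by Corollary~\ref{cor:strong-reflexive}, have a further subsequence with reflexive span. But $J_\mu T$ carries the nonreflexive JMS spans isometrically onto those spans, a contradiction. This is precisely the ``alternative route'' you relegate to a remark.

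Your route instead transfers the conclusion of Theorem~\ref{thm:explicit-Linfty} along the quotient map $R$, via the quotient criterion of Theorem~\ref{thm:criteria-general}, which holds in every Banach space. You therefore never need the Grothendieck property of $L_\infty(\mu)$, and $\sigma$-finiteness is not really used. What this buys is a general transfer principle: if $F$ is a quotient of $E$, a normalized weakly null sequence in $F^*$ with no strongly normal subsequence yields one in $E^*$, renormalizing if $R^*$ is not isometric. The paper's argument, by contrast, keeps the structural explanation in view (nonreflexive spans inside a Grothendieck dual) and stays uniform with the rest of the paper.

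Your detour through quotients can also be shortened. One has $G(R^*y_{i_k})=R^{-1}\bigl(G(y_{i_k})\bigr)$, and a continuous surjection maps dense sets onto dense sets. So strong normality of $(w_{i_k})$ gives strong normality of $(y_{i_k})$ directly, without invoking Theorem~\ref{thm:criteria-general} at all. Even in your version, only continuity and surjectivity of the induced map between quotients are needed, not the open mapping theorem.
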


\begin{proof}
Put $\mu_A=\mu(A)^{-1}\mu|_A$ and apply
Theorem~\ref{thm:explicit-Linfty} to the nonatomic probability space
$(A,\Sigma|_A,\mu_A)$.  The map
\[
 T:L_1(\mu_A)\longrightarrow L_1(\mu),\qquad
 Tf=\frac{1}{\mu(A)}f\mathbf 1_A,
\]
is a linear isometry.  Let $(\widetilde g_i)$ be the normalized sequence in $L_1(\mu_A)$
produced in Theorem~\ref{thm:explicit-Linfty}, and put
$h_i=T\widetilde g_i$.  Then the required functionals are explicitly
\[
 y_i(x)=\int_\Omega x(\omega)h_i(\omega)\,d\mu(\omega),
 \qquad x\in L_\infty(\mu).
\]
The composition $J_\mu T$, where
$J_\mu:L_1(\mu)\to L_\infty(\mu)^*$ is the canonical integration map,
is an isometry.  Since a $\sigma$-finite measure is localizable (after
completion), the same von Neumann algebra argument shows that
$L_\infty(\mu)$ is Grothendieck.  The proof of
Theorem~\ref{thm:explicit-Linfty} therefore applies unchanged.
\end{proof}

\begin{remark}\label{rem:atomic}
The presence of a nonatomic part is what makes this integration model
available.  In a purely atomic probability space, the canonical predual
$L_1(\mu)$ is an $\ell_1$-space and therefore has the Schur property; it
contains no normalized weakly null sequence.  Thus the above construction
cannot be carried out inside the canonical $L_1(\mu)$ part of
$L_\infty(\mu)^*$.  If there are only finitely many atoms, $L_\infty(\mu)$ is
finite-dimensional.  In the infinite purely atomic $\sigma$-finite case,
$L_\infty(\mu)$ is isometric to $\ell_\infty$, and the next section supplies a
different explicit representation, using a free ultrafilter.
\end{remark}

\section{An explicit representation in
\texorpdfstring{$\ell_\infty^*$}{the dual of l-infinity}}\label{sec:explicit}
For $\ell_\infty$ we can also write the counterexample by a concrete formula,
although here one nonconstructive datum remains: the choice of a free
ultrafilter on $\mathbb N$.  We use the same sequence $(g_i)$ from
Section~\ref{sec:concreteJMS}.

Let
\[
 \mathcal D=\{(m,j):m\in\mathbb N,\ 1\le j\le2^m\}
\]
and define the bijection $\rho:\mathcal D\to\mathbb N$ by
\[
 \rho(m,j)=2^m+j-2.
\]
Put
\[
 I_{m,j}=[(j-1)2^{-m},j2^{-m}),
\]
with the harmless convention that $1$ belongs to the last interval.  Fix a
free ultrafilter $\mathcal U$ on $\mathbb N$.  For $f\in L_1[0,1]$ and
$x=(x_r)\in\ell_\infty$, the scalar sequence
\[
 \left(\sum_{j=1}^{2^m}x_{\rho(m,j)}
   \int_{I_{m,j}}f(t)\,dt\right)_{m\ge1}
\]
is bounded by $\|x\|_\infty\|f\|_1$.  Hence its $\mathcal U$-limit exists,
and we define
\[
 (J_{\mathcal U}f)(x)
 =\lim_{m\to\mathcal U}
   \sum_{j=1}^{2^m}x_{\rho(m,j)}
   \int_{I_{m,j}}f(t)\,dt .
\tag{27}
\]
Ultrafilter limits of bounded real scalar sequences respect finite linear
combinations; hence $J_{\mathcal U}$ is linear in $f$ and, for fixed $f$,
linear in $x$.

\begin{lemma}\label{lem:explicit-embedding}
The map
\[
 J_{\mathcal U}:L_1[0,1]\longrightarrow\ell_\infty^*
\]
defined by \emph{(27)} is a linear isometry.
\end{lemma}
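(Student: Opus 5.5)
The plan is to check, in order, that $J_{\mathcal U}f$ is a bounded linear functional on $\ell_\infty$, that $\|J_{\mathcal U}f\|\le\|f\|_1$, and that the reverse inequality holds. Linearity in $f$ and in $x$ was already recorded after (27). Boundedness and the upper estimate come from the elementary bound
\[
 \Bigl|\sum_{j=1}^{2^m}x_{\rho(m,j)}\int_{I_{m,j}}f\,dt\Bigr|
 \le\|x\|_\infty\sum_{j=1}^{2^m}\int_{I_{m,j}}|f|\,dt=\|x\|_\infty\|f\|_1 ,
\]
valid for every $m$. An ultrafilter limit of numbers of modulus at most $C$ has modulus at most $C$. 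Hence $J_{\mathcal U}f\in\ell_\infty^*$ and $\|J_{\mathcal U}f\|\le\|f\|_1$.

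For the lower bound I would test $J_{\mathcal U}f$ against a single norm-one vector $x$ chosen so that the $m$-th term of the sequence in (27) tends to $\|f\|_1$ as $m\to\infty$ in the ordinary sense. Then its $\mathcal U$-limit equals $\|f\|_1$, since a free ultrafilter limit of a convergent sequence is the ordinary limit. Because $\rho$ is a bijection $\mathcal D\to\N$, the coordinates indexed by different levels $m$ are disjoint, so I may prescribe $x$ level by level:
\[
 x_{\rho(m,j)}=\operatorname{sgn}\Bigl(\int_{I_{m,j}}f\,dt\Bigr)\in\{-1,0,1\}.
\]
Then $\|x\|_\infty\le1$, and the $m$-th term equals
\[
 \sum_{j=1}^{2^m}\Bigl|\int_{I_{m,j}}f\,dt\Bigr|=\|\mathbb E(f\mid\mathcal F_m)\|_1 ,
\]
where $\mathcal F_m$ is the dyadic $\sigma$-algebra generated by the intervals $I_{m,j}$.

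The main point, though standard, is that $\|\mathbb E(f\mid\mathcal F_m)\|_1\to\|f\|_1$. I would prove it directly. Given $\varepsilon>0$, choose a dyadic step function $s$, measurable with respect to some $\mathcal F_{m_0}$, with $\|f-s\|_1<\varepsilon$; such functions are dense in $L_1[0,1]$. For $m\ge m_0$ we have $\mathbb E(s\mid\mathcal F_m)=s$. Conditional expectation is an $L_1$-contraction, so
\[
 \|\mathbb E(f\mid\mathcal F_m)\|_1\ge\|s\|_1-\varepsilon\ge\|f\|_1-2\varepsilon .
\]
Combined with the trivial upper bound $\|\mathbb E(f\mid\mathcal F_m)\|_1\le\|f\|_1$, this gives convergence. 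Hence $(J_{\mathcal U}f)(x)=\|f\|_1$, so $\|J_{\mathcal U}f\|=\|f\|_1$ and $J_{\mathcal U}$ is a linear isometry.

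The only care needed is in using the bijectivity of $\rho$, which is what makes this simultaneous choice of signs across all levels legitimate. The endpoint convention for $t=1$ only affects a null set and plays no role.
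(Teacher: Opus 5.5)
Your proposal is correct and follows the paper's proof essentially step for step. You use the same uniform bound on the $m$-th terms, the same sign vector defined level by level through the bijection $\rho$, and the same identification of the $m$-th term with $\|\mathbb E(f\mid\mathcal F_m)\|_1$; the only difference is that you prove $\|\mathbb E(f\mid\mathcal F_m)\|_1\to\|f\|_1$ directly by a density argument and state explicitly that a free ultrafilter limit of a convergent sequence is its ordinary limit, whereas the paper cites dyadic martingale convergence.
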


\begin{proof}
For $m\in\mathbb N$ set
\[
 \Phi_m(f,x)=\sum_{j=1}^{2^m}x_{\rho(m,j)}
   \int_{I_{m,j}}f(t)\,dt.
\]
Then
\[
 |\Phi_m(f,x)|
 \le \|x\|_\infty\sum_{j=1}^{2^m}
       \left|\int_{I_{m,j}}f\,d\lambda\right|
 \le \|x\|_\infty\|f\|_1.
\]
Hence (27) defines a bounded linear functional and
$\|J_{\mathcal U}f\|\le\|f\|_1$.

For the reverse inequality, define $x^{(f)}\in B_{\ell_\infty}$ by
\[
 x^{(f)}_{\rho(m,j)}=
 \operatorname{sgn}\!\left(\int_{I_{m,j}}f\,d\lambda\right),
\tag{28}
\]
where $\operatorname{sgn}(0)=0$.  Let $\mathcal F_m$ be the dyadic
$\sigma$-algebra of level $m$.  Then
\[
 \sum_{j=1}^{2^m}\left|\int_{I_{m,j}}f\,d\lambda\right|
 =\|\mathbb E(f\mid\mathcal F_m)\|_1.
\]
The dyadic conditional expectations converge to $f$ in $L_1$, and therefore
\[
 \lim_{m\to\infty}
 \sum_{j=1}^{2^m}\left|\int_{I_{m,j}}f\,d\lambda\right|
 =\|f\|_1.
\]
Using (28) in (27) gives
\[
 (J_{\mathcal U}f)(x^{(f)})=\|f\|_1.
\]
Thus $\|J_{\mathcal U}f\|\ge\|f\|_1$, and the proof is complete.
\end{proof}

\medskip
\noindent\textit{Location of the range.}
The image of $J_{\mathcal U}$ lies in the annihilator
\[
        c_0^\perp=\{\phi\in\ell_\infty^*: \phi|_{c_0}=0\}.
\]
Indeed, if $x\in c_0$, then $\rho(m,j)\ge 2^m-1$ for all
$1\le j\le2^m$, and therefore
\[
 |\Phi_m(f,x)|
 \le \|f\|_1\sup_{r\ge2^m-1}|x_r|
 \longrightarrow0.
\]
Hence $(J_{\mathcal U}f)(x)=0$ for every $x\in c_0$.  Thus the explicit
copy of $L_1[0,1]$ constructed above is contained entirely in the singular
part $c_0^\perp$ of $\ell_\infty^*$.

\begin{theorem}\label{thm:explicit-linfty}
Let $(g_i)$ be the normalized sequence defined by \emph{(20)}, and define
\[
 \boxed{
 y_i(x)=\lim_{m\to\mathcal U}
   \sum_{j=1}^{2^m}x_{\rho(m,j)}
   \int_{I_{m,j}}g_i(t)\,dt,
 \qquad x\in\ell_\infty .}
\tag{29}
\]
Then
\[
        (y_i)\subset S(c_0^\perp)\subset S(\ell_\infty^*)
\]
is weakly null, hence normal, and no subsequence of $(y_i)$ is strongly
normal.
\end{theorem}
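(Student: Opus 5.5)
The proof will mirror that of Theorem~\ref{thm:explicit-Linfty}, with $J_\mu U$ replaced by the ultrafilter embedding $J_{\mathcal U}$ of Lemma~\ref{lem:explicit-embedding}. By (29), $y_i=J_{\mathcal U}g_i$ for every $i$. Since $J_{\mathcal U}$ is a linear isometry and $\|g_i\|_1=1$ by Proposition~\ref{prop:concrete-JMS}, we get $\|y_i\|=1$. The paragraph on the location of the range shows $J_{\mathcal U}(L_1[0,1])\subset c_0^\perp$, so $(y_i)\subset S(c_0^\perp)\subset S(\ell_\infty^*)$.

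For weak nullity, take $F\in\ell_\infty^{**}$. Then $J_{\mathcal U}^*F\in L_1[0,1]^*=L_\infty[0,1]$, and
\[
F(y_i)=(J_{\mathcal U}^*F)(g_i)\to0,
\]
because $(g_i)$ is weakly null in $L_1[0,1]$ by Proposition~\ref{prop:concrete-JMS}. Weak nullity implies weak-star nullity, so $(y_i)$ is normal.

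For the absence of strongly normal subsequences, I will argue by contradiction. The space $\ell_\infty$ is Grothendieck; this is classical, and it also follows from the von Neumann algebra argument of Section~\ref{sec:Linfty}, since $\ell_\infty=L_\infty$ of counting measure on $\mathbb N$, which is $\sigma$-finite. Suppose $(y_{i_k})$ were strongly normal. Corollary~\ref{cor:strong-reflexive} then gives a further subsequence $(y_{i_{k_l}})$ with $[y_{i_{k_l}}]$ reflexive. Because $J_{\mathcal U}$ is an isometry, hence has closed range and maps closures of spans onto closures of spans, we have
\[
[y_{i_{k_l}}]=J_{\mathcal U}\bigl([g_{i_{k_l}}]\bigr)\cong[g_{i_{k_l}}].
\]
By Proposition~\ref{prop:concrete-JMS}, $[g_{i_{k_l}}]$ contains a copy of $L_1[0,1]$ and is therefore nonreflexive. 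This contradicts the reflexivity of $[y_{i_{k_l}}]$.

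An equivalent route is to invoke Corollary~\ref{cor:criteria-groth-obstruction} directly: it suffices that every subsequence of $(y_i)$ has nonreflexive closed span, and this is exactly what the isometry transfers from Proposition~\ref{prop:concrete-JMS}.

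The only steps needing care are the identity $y_i=J_{\mathcal U}g_i$, which is a direct comparison of (27) and (29), and the Grothendieck property of $\ell_\infty$. Neither is a genuine obstacle. The substantive work has already been done in Lemma~\ref{lem:explicit-embedding}, which establishes the isometry, and in Lemma~\ref{lem:wstar-basic}.
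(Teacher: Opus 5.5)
Your proposal is correct and follows the paper's proof essentially step for step. You identify $y_i=J_{\mathcal U}g_i$, transfer normalization and weak nullity through the isometry, and reach a contradiction by combining Corollary~\ref{cor:strong-reflexive} with the nonreflexivity of $[g_{i_{k_l}}]$ from Proposition~\ref{prop:concrete-JMS}. Your explicit checks of the Grothendieck property of $\ell_\infty$ and of the location in $c_0^\perp$ only spell out what the paper uses implicitly or proves just before the theorem.
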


\begin{proof}
By Lemma~\ref{lem:explicit-embedding}, $y_i=J_{\mathcal U}g_i$ and
$\|y_i\|=1$.  Since $J_{\mathcal U}$ is bounded linear and
$g_i\to0$ weakly in $L_1[0,1]$, we have $y_i\to0$ weakly in
$\ell_\infty^*$; in particular $(y_i)$ is normal.

Suppose that $(y_{i_k})$ were strongly normal.  By
Corollary~\ref{cor:strong-reflexive}, it would have a further subsequence
$(y_{i_{k_l}})$ whose norm-closed linear span is reflexive.  On the other
hand, $J_{\mathcal U}$ is an isometry, so
\[
 [y_{i_{k_l}}]=J_{\mathcal U}([g_{i_{k_l}}]).
\]
Proposition~\ref{prop:concrete-JMS} implies that
$[g_{i_{k_l}}]$ contains a copy of $L_1[0,1]$ and is therefore
nonreflexive.  This contradiction proves the assertion.
\end{proof}

Equivalently, viewing $\ell_\infty^*$ as $ba(\mathbb N)$ and writing
$y_i(A):=y_i(\mathbf 1_A)$, the finitely additive measure corresponding to
$y_i$ is given, for $A\subset\mathbb N$, by
\[
 y_i(A)=\lim_{m\to\mathcal U}
 \sum_{\substack{1\le j\le2^m\\ \rho(m,j)\in A}}
 \int_{I_{m,j}}g_i(t)\,dt.
\tag{30}
\]
Thus the counterexample in $\ell_\infty^*$ is not merely obtained from an
abstract embedding theorem: after fixing $\mathcal U$ and the recursive
dyadic data above, every term of the sequence is given by the concrete
formula (29), or equivalently (30), and the entire sequence lies in
$c_0^\perp$.  The dependence on a free ultrafilter is the only noncanonical
part of this representation.

\section{Obstruction to strongly normal block sequences}\label{sec:block-obstruction}

The counterexamples considered above exclude strongly normal subsequences.  It
is natural to ask whether passing from subsequences to block sequences can
restore strong normality.  The answer is again negative, and in a rather
strong form.

Recall that if $(y_n)$ is a basic sequence, a \emph{normalized block sequence}
of $(y_n)$ is a sequence
\[
 z_j=\sum_{n=p_j}^{q_j}a_n^{(j)}y_n,
 \qquad
 p_j\le q_j<p_{j+1},
 \qquad \|z_j\|=1.
\tag{31}
\]
We shall use a construction of Popov.  A Banach space is called
\emph{hereditarily $\ell_1$} if every infinite-dimensional closed subspace
contains a further subspace isomorphic to $\ell_1$.

\begin{theorem}[Popov \cite{Popov2005}]\label{thm:Popov}
There exists a closed hereditarily $\ell_1$ subspace
\[
 P\subset L_1[0,1]
\]
which does not have the Schur property.
\end{theorem}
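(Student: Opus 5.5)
The statement is an external result of Popov, which the paper will only invoke, so I only outline how I would establish it. The plan rests on two standard dichotomies for subspaces of $L_1[0,1]$. By Kadec--Pe{\l}czy\'nski and Rosenthal, a closed subspace $X\subset L_1$ either contains a (complemented) copy of $\ell_1$, or its unit ball is uniformly integrable. In the latter case $X$ is reflexive, and there is $p>1$ and $c>0$ with
\[
 \|x\|_p\le c\|x\|_1,\qquad x\in X.
\]
Consequently a closed subspace $P\subset L_1$ is hereditarily $\ell_1$ if and only if no infinite-dimensional closed subspace of $P$ has uniformly integrable unit ball. I will therefore construct $P=[f_n]$ for a normalized weakly null sequence $(f_n)$. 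Weak nullity immediately gives the failure of the Schur property, so the whole task is to make every infinite-dimensional subspace of $P$ non-equi-integrable.

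For the construction I would mimic Section~\ref{sec:concreteJMS}. Take
\[
 f_n=\sum_r a_r h_{n,r},
\]
where, for each level $r$, the functions $(h_{n,r})_n$ are independent, symmetric, three-valued and mean zero. Each $h_{n,r}$ is supported on a set of relative measure $1/k_r$, the supports run through a fixed dyadic set, and $k_r$ grows very rapidly. The weights $a_r$ are chosen so that the levels $r\ge2$ are uniformly summable in $L_1$. Normalization and weak nullity then follow exactly as for (18)--(19). The essential difference from JMS is the choice of parameters. They must prevent the Haar-type approximation and instead force a \emph{peaking} phenomenon: the $L_1$-mass of every norm-one vector of the span should be carried mostly by sets of very small measure.

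The argument then has three steps.

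\emph{Step 1 (reduction to blocks).} Given an infinite-dimensional closed subspace $X\subset P$, I would use a gliding hump argument to pass to a normalized sequence in $X$ that is a small perturbation of a block sequence $(b_j)$ of $(f_n)$. It then suffices to treat block subspaces.

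\emph{Step 2 (small-support estimate).} For a normalized vector
\[
 b=\sum_{n\in\sigma}c_nf_n
\]
with finite index set $\sigma$, I would prove a two-sided estimate comparing $\|b\|_1$ with the contribution of the level $r$ for which $k_r$ is comparable to $|\sigma|$. The $L_2$ computation behind (17) controls the low levels. For the high levels, the supports are so sparse that they are essentially disjoint, so these levels behave like an $\ell_1$-sum of pieces of tiny support.

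\emph{Step 3 (conclusion).} Using Step~2, in each block subspace I would find vectors $x$ of norm one with $\|x\|_p/\|x\|_1$ arbitrarily large, for every fixed $p>1$. This contradicts equi-integrability, so the block subspace contains $\ell_1$.

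I expect Step~2 to be the main obstacle. The difficulty is to control \emph{arbitrary} coefficient vectors $(c_n)$, not just flat averages, and to ensure that cancellation across levels cannot produce an equi-integrable (Gaussian-like, $\ell_2$) block sequence. My first attempt would be to choose the $k_r$ so lacunary that a decoupling or Rosenthal-type inequality for independent mean-zero variables applies level by level. Such an inequality bounds $\|\sum_n c_nh_{n,r}\|_1$ below by a combination of $\bigl(\sum_n|c_n|^2/k_r\bigr)^{1/2}$ and $\sum_n|c_n|/k_r$. I would then argue that for any block sequence the $\ell_1$-type term eventually dominates on some level, which yields the required concentration on sets of small measure.
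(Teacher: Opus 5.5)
\paragraph{Main gap.} The paper gives no proof of this statement; it imports it from Popov. Judged on its own, your outline has a genuine gap. Step~2, which you yourself flag as the main obstacle, is the entire content of the theorem. You neither fix $a_r,k_r$ nor show that any choice makes every block subspace non-equi-integrable.

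This is not routine. The Johnson--Maurey--Schechtman scheme you imitate is built to do the opposite: every subsequence span contains $L_1$, hence $\ell_2$. Your proposed remedy, lacunarity, also misses the real danger. Let $v_r=a_r\lambda(E_r)/k_r$ be the $L_1$-mass of level $r$, and let $T_r=\sum_{s>r}v_s$. Take a flat block of length $m\gg k_r$. Level $r$ is then in the central-limit regime and contributes an essentially Gaussian vector of $L_1$-norm about $\sqrt{mk_r}\,v_r$. The rare large values from higher levels contribute only about $mT_r$.

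So suppose $v_r/T_r\to\infty$. This is exactly what (14) with $a_r=k_r^{3/4}$ forces, since it gives $v_{r+1}/v_r\le 2^{-r-1}k_r^{-3/4}$. Then for $k_r\ll m\ll k_r(v_r/T_r)^2$ the normalized flat blocks are essentially Gaussian, hence equi-integrable. When the levels are independent, for instance, such blocks on far-apart index sets span $\ell_2$. Faster growth of $k_r$ only widens these windows. The condition actually needed is something like $\sup_r v_r/T_r<\infty$. It never appears in your plan, and there is no argument controlling arbitrary coefficient vectors.

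\paragraph{Step~3 misquotes Rosenthal.} Rosenthal's theorem gives $\|x\|_p\le c\|x\|_1$ on a reflexive $X\subset L_1$ only after a change of density. Without that change the inequality is false. Take $g\ge0$ with $\int g=1$ and $g\notin\bigcup_{p>1}L_p$, chosen independent of the Rademacher functions $(r_n)$. Then the closed span of $(g\,r_n)$ is isomorphic to $\ell_2$, yet none of its nonzero elements lies in any $L_p$ with $p>1$. So unbounded ratios $\|x\|_p/\|x\|_1$ do not contradict equi-integrability. In your own construction $|f_n|$ must have barely integrable tails, so $\|x\|_p$ will typically be infinite and the test is vacuous.

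What must be shown is a genuine failure of equi-integrability. You need some $\delta>0$ such that every block subspace contains unit vectors putting mass at least $\delta$ on sets of arbitrarily small measure.

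\paragraph{A way to close both gaps.} Make all pieces independent with $E_r=[0,1]$, so that $(f_n)$ is i.i.d.\ symmetric. Then:
\begin{itemize}
\item Step~2 becomes the known estimate $\|\sum c_nf_n\|_1\approx\|(c_n)\|_{h_M}$ with $M(t)=\mathbb E\min(t^2f^2,t|f|)$ (Johnson--Schechtman--Zinn; Kwapie\'n--Sch\"utt).
\item A tail such as $\mathbb P(|f|>u)\approx 1/(u\log^2u)$ gives $M(t)\approx t/\log(1/t)$, whose indices are $\alpha_M=\beta_M=1$.
\item The Lindenstrauss--Tzafriri theory of Orlicz sequence spaces then shows that every infinite-dimensional subspace of $h_M$ contains $\ell_1$.
\item The weakly null basis $(f_n)$ witnesses the failure of the Schur property.
\end{itemize}
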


The absence of the Schur property is precisely what permits weakly null
sequences which are not norm null, while hereditary $\ell_1$ prevents any
infinite-dimensional subspace from being reflexive.  These two features fit
the criterion from Section~\ref{sec:criteria} exactly.

\begin{lemma}\label{lem:Popov-basic}
The space $P$ from Theorem~\ref{thm:Popov} contains a normalized weakly null
basic sequence $(x_n)$ such that every normalized block sequence $(u_j)$ of
$(x_n)$ satisfies
\[
 [u_j]\text{ contains a copy of }\ell_1,
\tag{32}
\]
and hence $[u_j]$ is nonreflexive.  The same conclusion holds for every
subsequence of every such block sequence.
\end{lemma}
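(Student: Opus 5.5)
Since $P$ lacks the Schur property, we begin with a weakly null sequence $(w_n)\subset P$ that is not norm null. After passing to a subsequence we may assume $\inf_n\|w_n\|>0$, and after normalizing we obtain a normalized weakly null sequence in $P$. Normalizing is harmless: the factors $\|w_n\|^{-1}$ are bounded, so the normalized sequence is still weakly null. Next we apply the Bessaga--Pe{\l}czy\'nski selection principle. A normalized weakly null sequence is semi-normalized and has $0$ in the weak closure of its range, so it has a basic subsequence. Equivalently, Mazur's technique produces a basic subsequence directly. We call this subsequence $(x_n)$. It lies in $P$, it is normalized and weakly null, and it is basic.

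Now let $(u_j)$ be any normalized block sequence of $(x_n)$ as in (31). The vectors $u_j$ are nonzero and have disjoint supports with respect to the basis $(x_n)$, so $(u_j)$ is again basic and the vectors are linearly independent. Hence $[u_j]$ is an infinite-dimensional closed subspace of $[x_n]\subset P$. Since $P$ is hereditarily $\ell_1$, $[u_j]$ contains a subspace isomorphic to $\ell_1$, which is (32). Because $\ell_1$ is nonreflexive and closed subspaces of reflexive spaces are reflexive, $[u_j]$ is nonreflexive.

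If $(u_{j_k})$ is a subsequence of $(u_j)$, then it is itself a normalized block sequence of $(x_n)$, because the ordering $q_{j_k}<p_{j_{k+1}}$ is preserved. The same argument therefore applies to $[u_{j_k}]$. Alternatively, $[u_{j_k}]$ is infinite-dimensional and contained in $P$, and we can invoke heredity directly.

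We do not expect any serious obstacle. The one point needing care is that the basic subsequence must be extracted from a sequence that is bounded away from zero in norm; this is exactly what the failure of the Schur property supplies.
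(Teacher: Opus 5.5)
Your proposal is correct and follows the paper's proof essentially step for step. Failure of the Schur property yields a weakly null sequence bounded away from zero, and normalizing preserves weak nullity, justified, as you correctly note, by the lower bound on $\|w_n\|$. The Bessaga--Pe{\l}czy\'nski selection principle then extracts the basic subsequence $(x_n)$, and every normalized block sequence spans an infinite-dimensional closed subspace of the hereditarily $\ell_1$ space $P$. You handle subsequences exactly as the paper does, by observing that a subsequence of a block sequence is again a block sequence.
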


\begin{proof}
Since $P$ fails the Schur property, there is a weakly convergent sequence in
$P$ which is not norm convergent.  Subtracting its weak limit, we obtain a
weakly null sequence $(v_n)$ which is not norm null.

Passing to a subsequence, there is a $\delta>0$
with
\[
 \|v_n\|\ge\delta\qquad(n\in\mathbb N).
\]
Weakly convergent sequences are norm bounded, so after putting
\[
 w_n=\frac{v_n}{\|v_n\|}
\]
we obtain a normalized weakly null sequence.

By the
Bessaga--Pe\l czy\'nski selection principle
\cite[Chapter~V]{Diestel1984}, $(w_n)$ has a basic subsequence.  Relabel this
subsequence as $(x_n)$.

Let $(u_j)$ be any normalized block sequence of $(x_n)$.  Every block sequence
of a basic sequence is basic, so $[u_j]$ is an infinite-dimensional closed
subspace of $P$.  Since $P$ is hereditarily $\ell_1$, the space $[u_j]$
contains a copy of $\ell_1$ and therefore is not reflexive.

A subsequence of
$(u_j)$ is again a normalized block sequence of $(x_n)$, so the same argument
applies to each of its subsequences.
\end{proof}

We can now strengthen the main negative theorem.

\begin{theorem}\label{thm:no-strong-blocks}
Let $E$ be a nonreflexive Grothendieck Banach space.  Then $E^*$ contains a
normalized weakly null basic sequence $(y_n)$ such that no normalized block
sequence of $(y_n)$ is strongly normal.
\end{theorem}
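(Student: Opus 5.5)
The plan is to transport Popov's sequence from Lemma~\ref{lem:Popov-basic} into $E^*$ through the isometric embedding $J:L_1[0,1]\to E^*$ of Lemma~\ref{lem:L1insideHaydon}. Then we run the same contradiction as in Theorem~\ref{thm:main-negative}, now using Corollary~\ref{cor:strong-reflexive} against block sequences rather than subsequences. Concretely, let $(x_n)\subset P\subset L_1[0,1]$ be the normalized weakly null basic sequence given by Lemma~\ref{lem:Popov-basic}, and put $y_n=Jx_n$.

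\emph{Properties of $(y_n)$.} Since $J$ is an isometry, $\|y_n\|=1$. The sequence $(y_n)$ is weakly null, exactly as in Step~2 of the proof of Theorem~\ref{thm:main-negative}: for $F\in E^{**}$ we have $F(Jx_n)=(J^*F)(x_n)\to0$. It is basic with the same basis constant, because $J$ preserves norms of all finite linear combinations.

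\emph{Transfer of block sequences.} Let $(z_j)$ be a normalized block sequence of $(y_n)$, say
\[
z_j=\sum_{n=p_j}^{q_j}a_n^{(j)}y_n .
\]
Put $u_j=\sum_{n=p_j}^{q_j}a_n^{(j)}x_n$. Then $z_j=Ju_j$, and $\|u_j\|=\|z_j\|=1$, so $(u_j)$ is a normalized block sequence of $(x_n)$. Moreover, every subsequence of $(z_j)$ is $J$ applied to the corresponding subsequence of $(u_j)$, which is again a normalized block sequence of $(x_n)$.

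\emph{Contradiction.} Suppose $(z_j)$ were strongly normal. Since $E$ is Grothendieck, Corollary~\ref{cor:strong-reflexive} gives a subsequence $(z_{j_l})$ with $[z_{j_l}]$ reflexive. Because $J$ is an isometry with closed range, $[z_{j_l}]=J([u_{j_l}])$, so $[z_{j_l}]$ is isometric to $[u_{j_l}]$. By the last sentence of Lemma~\ref{lem:Popov-basic}, $[u_{j_l}]$ contains a copy of $\ell_1$ and is therefore nonreflexive. This contradicts the reflexivity of $[z_{j_l}]$, so no normalized block sequence of $(y_n)$ is strongly normal.

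\emph{Main obstacle.} I expect no serious difficulty. The one point needing care is that Corollary~\ref{cor:strong-reflexive} only yields a subsequence of the block sequence, so the obstruction must hold for subsequences of block sequences as well. Lemma~\ref{lem:Popov-basic} already states this, since a subsequence of a block sequence is again a block sequence. We should also note in passing that a strongly normal $(z_j)$ is automatically normal, so Corollary~\ref{cor:strong-reflexive} applies. Nothing beyond these identifications is required.
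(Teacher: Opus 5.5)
Your proposal is correct and follows the paper's proof essentially step for step. Like the paper, you transport Popov's sequence into $E^*$ via the isometry $J$ of Lemma~\ref{lem:L1insideHaydon}, pull a hypothetical strongly normal block sequence back to a normalized block sequence of $(x_n)$, and then contradict the reflexive subsequence supplied by Corollary~\ref{cor:strong-reflexive} using the hereditarily $\ell_1$ property recorded in Lemma~\ref{lem:Popov-basic}.
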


\begin{proof}
Let $(x_n)\subset P\subset L_1[0,1]$ be the sequence from
Lemma~\ref{lem:Popov-basic}.  By Lemma~\ref{lem:L1insideHaydon}, there is a
linear isometry
\[
 J:L_1[0,1]\longrightarrow E^*.
\]

Put
\[
 y_n=Jx_n,\qquad n\in\mathbb N.
\tag{33}
\]
Since $J$ is an isometry, $(y_n)$ is normalized and basic.  Since $(x_n)$ is
weakly null in $L_1[0,1]$ and $J$ is bounded linear, $(y_n)$ is weakly null in
$E^*$ and hence normal.

Suppose, towards a contradiction, that a normalized block sequence $(z_j)$ of
$(y_n)$ is strongly normal.  Write
\[
 z_j=\sum_{n=p_j}^{q_j}a_n^{(j)}y_n
     =J\!\left(\sum_{n=p_j}^{q_j}a_n^{(j)}x_n\right)
     =Ju_j .
\tag{34}
\]
Because $J$ is an isometry, $(u_j)$ is a normalized block sequence of
$(x_n)$.

By Corollary~\ref{cor:strong-reflexive}, the strongly normal
sequence $(z_j)$ has a subsequence $(z_{j_k})$ whose norm-closed linear span is
reflexive.

On the other hand,
\[
 [z_{j_k}]=J([u_{j_k}]).
\tag{35}
\]

By Lemma~\ref{lem:Popov-basic}, $[u_{j_k}]$ contains a copy of $\ell_1$ and is
nonreflexive.  Its isometric image in (35) is therefore nonreflexive, a
contradiction.
\end{proof}

Thus the obstruction can persist under arbitrary blocking, not merely under
passage to subsequences.  In fact, the proof gives the following useful
structural criterion.

\begin{proposition}\label{prop:no-block-reflexive}
Let $E$ be a Grothendieck Banach space and let $(y_n)\subset S(E^*)$ be a
normal basic sequence.  If $[y_n]$ contains no infinite-dimensional reflexive
subspace, then no normalized block sequence of $(y_n)$ is strongly normal.
\end{proposition}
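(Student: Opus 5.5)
Argue by contradiction, following the proof of Theorem~\ref{thm:no-strong-blocks} with the hereditary hypothesis on $[y_n]$ in place of Popov's space. Suppose some normalized block sequence $(z_j)$ of $(y_n)$, as in (31), is strongly normal. Since $E$ is Grothendieck, Corollary~\ref{cor:strong-reflexive} gives a subsequence $(z_{j_k})$ that is Schauder basic in $(E^*,\sigma(E^*,E))$ and whose norm-closed linear span $[z_{j_k}]$ is reflexive. Alternatively, we can reach the same reflexivity conclusion through Theorem~\ref{thm:criteria-groth}: the sequence $(z_j)$ is normalized and, being strongly normal, is normal, so condition~(1) and hence condition~(5) of that theorem hold for $(z_j)$ itself.

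Next we show that $[z_{j_k}]$ is an infinite-dimensional subspace of $[y_n]$. It is contained in $[y_n]$ because every $z_j$ is a finite linear combination of the $y_n$. For infinite dimensionality, note that $(z_{j_k})$ is a subsequence of a block sequence of the basic sequence $(y_n)$, so it is again a block sequence. The blocks have pairwise disjoint supports, $p_j\le q_j<p_{j+1}$, and each $z_j\neq0$ since $\|z_j\|=1$. Hence $(z_{j_k})$ is a basic sequence of nonzero vectors, so these vectors are linearly independent and their span is infinite-dimensional. More simply, any infinite sequence of nonzero vectors with disjoint supports relative to the basis $(y_n)$ is linearly independent, by uniqueness of expansions in $[y_n]$.

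It follows that $[z_{j_k}]$ is an infinite-dimensional reflexive closed subspace of $[y_n]$, contradicting the hypothesis. Therefore no normalized block sequence of $(y_n)$ is strongly normal.

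The argument has no real obstacle. The only points to check are that the basis is used solely to ensure the blocks are linearly independent, and that Corollary~\ref{cor:strong-reflexive} requires nothing about $(z_j)$ beyond strong normality, which is assumed. The normality of $(y_n)$ is not actually needed for the argument; it only serves to place the statement in context.
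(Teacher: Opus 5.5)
Your proof is correct and is essentially the paper's own argument: Corollary~\ref{cor:strong-reflexive} gives a subsequence $(z_{j_k})$ with reflexive span $[z_{j_k}]\subset[y_n]$, which contradicts the hypothesis. Your block-disjointness argument for infinite-dimensionality spells out a step the paper asserts without comment (elsewhere the paper gets it from the linear independence of the weak-star basic subsequence), and you are right that the normality of $(y_n)$ is never used.
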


\begin{proof}
If a normalized block sequence $(z_j)$ were strongly normal,
Corollary~\ref{cor:strong-reflexive} would give a subsequence $(z_{j_k})$ such
that $[z_{j_k}]$ is reflexive.  This is an infinite-dimensional reflexive
subspace of $[y_n]$, contrary to the hypothesis.
\end{proof}

For Popov's space the hypothesis of Proposition~\ref{prop:no-block-reflexive}
is satisfied in the strongest possible way: every infinite-dimensional
closed subspace contains $\ell_1$.

There is also a direct realization in the canonical $L_1$ part of the dual of
$L_\infty[0,1]$.

\begin{corollary}\label{cor:Linfty-no-strong-blocks}
There exists a normalized weakly null basic sequence $(y_n)$ in
$(L_\infty[0,1])^*$ such that no normalized block sequence of $(y_n)$ is
strongly normal.  It can be chosen in the canonical copy of $L_1[0,1]$ and is
given by
\[
 y_n(f)=\int_0^1 f(t)x_n(t)\,dt,
 \qquad f\in L_\infty[0,1],
\tag{36}
\]
where $(x_n)$ is the sequence from Lemma~\ref{lem:Popov-basic}.
\end{corollary}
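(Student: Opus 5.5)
The plan is to instantiate Theorem~\ref{thm:no-strong-blocks} concretely, with the abstract Haydon embedding replaced by the canonical isometry
\[
 J_\lambda:L_1[0,1]\longrightarrow L_\infty[0,1]^*,\qquad (J_\lambda g)(f)=\int_0^1 f(t)g(t)\,dt,
\]
which is (24) for Lebesgue measure on $[0,1]$. Take $(x_n)\subset P\subset L_1[0,1]$ from Lemma~\ref{lem:Popov-basic} and put $y_n=J_\lambda x_n$. This is exactly formula (36), so the sequence lies in the canonical copy of $L_1[0,1]$.

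First I check the routine properties. Since $J_\lambda$ is a linear isometry (equality is attained at $f=\operatorname{sgn} g$), the sequence $(y_n)$ is normalized. It is basic, because an isometric embedding carries basic sequences to basic sequences with the same basis constant. It is weakly null, because $J_\lambda$ is bounded and linear and $x_n\to0$ weakly in $L_1[0,1]$. Hence $(y_n)$ is normal.

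Next I need $L_\infty[0,1]$ to be Grothendieck. This was already argued in Section~\ref{sec:Linfty} via Pfitzner's theorem for the nonatomic probability space $([0,1],\lambda)$, so I simply cite that discussion.

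Then the obstruction follows as in the proof of Theorem~\ref{thm:no-strong-blocks}. Suppose a normalized block sequence $z_j=\sum_{n=p_j}^{q_j}a^{(j)}_n y_n$ were strongly normal. Writing $z_j=J_\lambda u_j$ with $u_j=\sum_{n=p_j}^{q_j}a_n^{(j)}x_n$, isometry shows that $(u_j)$ is a normalized block sequence of $(x_n)$. Corollary~\ref{cor:strong-reflexive} gives a subsequence $(z_{j_k})$ with $[z_{j_k}]=J_\lambda([u_{j_k}])$ reflexive. However, Lemma~\ref{lem:Popov-basic} says that $[u_{j_k}]$ contains $\ell_1$ and is therefore nonreflexive, which is a contradiction. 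Alternatively, one can invoke Proposition~\ref{prop:no-block-reflexive} directly, since $[y_n]=J_\lambda([x_n])$ is isometric to a subspace of the hereditarily $\ell_1$ space $P$ and so has no infinite-dimensional reflexive subspace.

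No step is a real obstacle. The only point needing care is the Grothendieck property of the real space $L_\infty[0,1]$, and that is already settled in Section~\ref{sec:Linfty}.
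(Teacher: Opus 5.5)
Your proposal is correct and follows the paper's own proof: you replace the Haydon embedding in Theorem~\ref{thm:no-strong-blocks} by the canonical isometry $J_\lambda$ and rerun that argument, using that $L_\infty[0,1]$ is Grothendieck. Your alternative route through Proposition~\ref{prop:no-block-reflexive} is also valid, since $[y_n]=J_\lambda([x_n])$ is isometric to a subspace of the hereditarily $\ell_1$ space $P$.
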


\begin{proof}
Apply the canonical isometry
\[
 J_\lambda:L_1[0,1]\longrightarrow (L_\infty[0,1])^*,
 \qquad
 (J_\lambda x)(f)=\int_0^1fx\,d\lambda,
\]
to the sequence $(x_n)$.  The proof of Theorem~\ref{thm:no-strong-blocks}
then applies verbatim, using that $L_\infty[0,1]$ is Grothendieck.
\end{proof}

Theorem~\ref{thm:no-strong-blocks} shows that replacing subsequences by block
sequences does not rescue Problem~(P), even within the class of nonreflexive
Grothendieck spaces.  The Johnson--Maurey--Schechtman construction used in the
main theorem guarantees nonreflexivity of the closed span of every
\emph{subsequence}; Popov's hereditary $\ell_1$ construction provides the
stronger hereditary obstruction needed to control every normalized
\emph{block sequence}.

\section{A hereditarily \texorpdfstring{$\ell_1$}{l1} subspace containing no strongly normal sequence}\label{sec:hereditary-strongest}

We first separate the part of the argument which has nothing to do with
Grothendieck spaces.

\begin{proposition}\label{prop:L1-hereditary-basis}
Let $X$ be a Banach space which contains an isomorphic copy of $L_1[0,1]$.
Then $X$ contains a closed subspace $Y$ with the following properties:
\begin{enumerate}
\item $Y$ is hereditarily $\ell_1$;
\item $Y$ has a normalized weakly null Schauder basis.
\end{enumerate}
In particular, $Y$ fails the Schur property and contains no
infinite-dimensional reflexive subspace.
\end{proposition}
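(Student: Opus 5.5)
The plan is to transport Popov's space into $X$ and then pass to a suitable basic sequence inside it, in the same way as in Lemma~\ref{lem:Popov-basic}.

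Let $T:L_1[0,1]\to X$ be an isomorphic embedding. Let $P\subset L_1[0,1]$ be the closed hereditarily $\ell_1$ subspace without the Schur property from Theorem~\ref{thm:Popov}.

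\emph{The ambient hereditarily $\ell_1$ space.} The space $T(P)$ is closed in $X$, since $T$ has closed range and $P$ is closed. It is isomorphic to $P$. Being hereditarily $\ell_1$ is an isomorphic invariant: an infinite-dimensional closed subspace $W\subset T(P)$ corresponds to the infinite-dimensional closed subspace $T^{-1}(W)\subset P$, which contains $\ell_1$. So $T(P)$ is hereditarily $\ell_1$, and every closed subspace of a hereditarily $\ell_1$ space is again hereditarily $\ell_1$.

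\emph{A weakly null basis.} Lemma~\ref{lem:Popov-basic} gives a normalized weakly null basic sequence $(x_n)\subset P$. Put $v_n=Tx_n/\|Tx_n\|$. The norms $\|Tx_n\|$ lie in $[\|T^{-1}\|^{-1},\|T\|]$, so $(v_n)$ is normalized. It is weakly null because $T$ is bounded linear and the scalars are bounded. It is basic, since a sequence equivalent to a basic sequence is basic; here $(Tx_n)$ is equivalent to $(x_n)$, and rescaling by bounded-away-from-zero scalars preserves basicness. Set $Y=[v_n]\subset T(P)$. Then $(v_n)$ is a normalized weakly null Schauder basis of $Y$, and $Y$ is hereditarily $\ell_1$ by the previous step. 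This gives (1) and (2).

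\emph{The final assertions.} $Y$ fails the Schur property because $(v_n)$ is weakly null but $\|v_n\|=1$. $Y$ has no infinite-dimensional reflexive subspace $R$: such an $R$ would contain a copy of $\ell_1$, which is nonreflexive, while closed subspaces of reflexive spaces are reflexive.

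The only point that needs care is the basic-sequence step, namely that basicness survives the isomorphic image and the renormalization. This is routine: the basis constant grows by at most the factor $\|T\|\,\|T^{-1}\|$, and the scalars are bounded above and away from zero. No real obstacle is expected beyond citing Popov's theorem and the Bessaga--Pe{\l}czy\'nski selection principle already used in Lemma~\ref{lem:Popov-basic}.
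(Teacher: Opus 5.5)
Your proposal is correct and follows essentially the same route as the paper: transport the normalized weakly null basic sequence $(x_n)$ of Lemma~\ref{lem:Popov-basic} by the embedding $T$, renormalize, and carry the hereditary $\ell_1$ property across the isomorphism. The only difference is cosmetic. You transport all of $P$ and then pass to the closed subspace $[v_n]$, whereas the paper first restricts to $X_0=[x_n]$ and transports that. As a small aside, rescaling by any nonzero scalars preserves basicness with the same basis constant, so the bound on $\|Tx_n\|^{-1}$ is needed only for weak nullity.
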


\begin{proof}
Let $T:L_1[0,1]\to X$ be an isomorphic embedding.  Thus $T$ is one-to-one,
$T(L_1)$ is closed, and there are constants $c,C>0$ such that
\[
        c\|f\|_1\le\|Tf\|\le C\|f\|_1
        \qquad(f\in L_1[0,1]).
\]

By Lemma~\ref{lem:Popov-basic}, Popov's space $P$ contains a normalized weakly
null basic sequence $(x_n)$.  Put $X_0=[x_n]$.  Since $X_0$ is an
infinite-dimensional closed subspace of the hereditarily $\ell_1$ space $P$,
$X_0$ is itself hereditarily $\ell_1$: every infinite-dimensional closed
subspace of $X_0$ is also one of $P$.

Define
\[
        v_n=Tx_n,
        \qquad
        y_n=\frac{v_n}{\|v_n\|}.
\]
Since $\|x_n\|=1$, the embedding inequalities give
\[
        c\le\|v_n\|\le C.
\]
Thus the normalization factors $\|v_n\|^{-1}$ are bounded above and away from
zero.

We verify weak nullity directly.  For $x^*\in X^*$,
\[
 x^*(y_n)
 =\frac{(T^*x^*)(x_n)}{\|Tx_n\|}.
\]
The numerator tends to zero because $x_n\to0$ weakly in $L_1[0,1]$, while the
denominator is at least $c$.  Hence $x^*(y_n)\to0$ for every $x^*$, so
$y_n\to0$ weakly.

The sequence $(Tx_n)$ is basic because $T$ restricts to an isomorphism from
$X_0$ onto $T(X_0)$.  Multiplying the $n$-th vector of a basic sequence by a
nonzero scalar preserves basicity provided the scalars and their reciprocals
are bounded; this holds here.  Therefore $(y_n)$ is a normalized weakly null
basic sequence.

Scalar rescaling does not change the closed linear span, so
\[
        [y_n]=[Tx_n]=T(X_0).
\]
The restriction $T|_{X_0}:X_0\to T(X_0)$ is an isomorphism.  The property
``hereditarily $\ell_1$'' is invariant under isomorphism: an
infinite-dimensional closed subspace of $T(X_0)$ pulls back to one of $X_0$,
which contains a copy of $\ell_1$, and applying $T$ transports that copy back.
Hence $Y:=T(X_0)$ is hereditarily $\ell_1$.

Finally, $Y$ contains no infinite-dimensional reflexive subspace.  If
$Z\subset Y$ were such a subspace, hereditary $\ell_1$ would force $Z$ to
contain a closed subspace isomorphic to $\ell_1$, but every closed subspace of
a reflexive space is reflexive and $\ell_1$ is not.  The weakly null
normalized basis also shows directly that $Y$ fails the Schur property.
\end{proof}

We now apply this observation to duals of nonreflexive Grothendieck spaces.

\begin{theorem}\label{thm:hereditary-no-strong}
Let $E$ be a nonreflexive Grothendieck Banach space.  Then there exists a
closed subspace $Y\subset E^*$ such that
\begin{enumerate}
\item $Y$ is hereditarily $\ell_1$;
\item $Y$ has a normalized weakly null Schauder basis $(y_n)$;
\item no sequence $(z_n)\subset S(Y)$ is strongly normal in $E^*$.
\end{enumerate}
Consequently, $(y_n)$ is a normal basic sequence, while no normalized
subsequence, no normalized block sequence, and in fact no normalized sequence
contained in $Y=[y_n]$ is strongly normal.
\end{theorem}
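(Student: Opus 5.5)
The plan is to combine Lemma~\ref{lem:L1insideHaydon} with Proposition~\ref{prop:L1-hereditary-basis}, and then use Corollary~\ref{cor:strong-reflexive} to rule out strongly normal sequences in $Y$.

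\emph{Construction of $Y$.} Since $E$ is nonreflexive and Grothendieck, Lemma~\ref{lem:L1insideHaydon} gives a linear isometry $J:L_1[0,1]\to E^*$. In particular $X=E^*$ contains an isomorphic copy of $L_1[0,1]$. Proposition~\ref{prop:L1-hereditary-basis} then yields a closed subspace $Y\subset E^*$ with two properties: $Y$ is hereditarily $\ell_1$, and $Y$ has a normalized weakly null Schauder basis $(y_n)$. This settles items (1) and (2).

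Because $(y_n)$ is weakly null in $E^*$, it is weak-star null, so it is a normal basic sequence.

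\emph{Item (3).} Suppose, for contradiction, that some $(z_n)\subset S(Y)$ is strongly normal in $E^*$. By Corollary~\ref{cor:strong-reflexive}, which uses only that $E$ is Grothendieck, there is a subsequence $(z_{n_k})$ with two properties: it is Schauder basic in $(E^*,\sigma(E^*,E))$, and $[z_{n_k}]$ is reflexive.

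The key point is that $[z_{n_k}]$ is infinite-dimensional. A Schauder basic sequence of normalized vectors is linearly independent, so its span cannot be finite-dimensional. Since $Y$ is norm closed, $[z_{n_k}]\subset Y$. So $[z_{n_k}]$ is an infinite-dimensional reflexive closed subspace of $Y$. Proposition~\ref{prop:L1-hereditary-basis} says $Y$ has no such subspace, which is a contradiction.

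\emph{The final sentence of the statement.} Every normalized subsequence of $(y_n)$ lies in $S(Y)$, and so does every normalized block sequence of $(y_n)$. Item (3) therefore covers all of them.

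The main thing to double-check is the infinite-dimensionality of $[z_{n_k}]$. One could worry that an arbitrary sequence in $S(Y)$ spans only a finite-dimensional space. That case cannot occur. Strong normality implies normality, i.e.\ $z_n\to0$ weak-star. If all $z_n$ lay in a finite-dimensional subspace, weak-star convergence there would coincide with norm convergence, contradicting $\|z_n\|=1$. So no subtlety arises, and the argument is purely a chaining of earlier results.
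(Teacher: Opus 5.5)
Your proposal is correct and follows the paper's proof essentially step by step. You embed $L_1[0,1]$ isometrically into $E^*$ via Haydon's theorem and Lemma~\ref{lem:L1insideHaydon}, obtain $Y$ from Proposition~\ref{prop:L1-hereditary-basis}, and then use Corollary~\ref{cor:strong-reflexive} to produce an infinite-dimensional reflexive closed subspace $[z_{n_k}]\subset Y$, which the hereditary $\ell_1$ property forbids. The paper justifies infinite-dimensionality only by linear independence of the weak-star basic subsequence; your extra argument is also valid, since a normalized weak-star null sequence cannot lie in a finite-dimensional subspace, where $\sigma(E^*,E)$ is Hausdorff and therefore coincides with the norm topology.
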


\begin{proof}
By Haydon's theorem and Lemma~\ref{lem:L1insideHaydon}, the dual $E^*$ contains
an isometric copy of $L_1[0,1]$.  Apply
Proposition~\ref{prop:L1-hereditary-basis} with $X=E^*$.  We obtain a closed
subspace $Y\subset E^*$ which is hereditarily $\ell_1$ and has a normalized
weakly null Schauder basis $(y_n)$.  This proves assertions (1) and (2).
Because weak convergence in $E^*$ implies weak-star convergence,
$(y_n)$ is also a normal sequence.

It remains to prove the much stronger assertion (3).  Let $(z_n)$ be
\emph{any} sequence in the unit sphere $S(Y)$; it need not be a subsequence or
a block sequence of $(y_n)$.  Suppose, for contradiction, that $(z_n)$ is
strongly normal in $E^*$.

Since $E$ is Grothendieck, Corollary~\ref{cor:strong-reflexive} supplies a
subsequence $(z_{n_k})$ such that
\[
        Z=[z_{n_k}]
\]
is reflexive.  The selected subsequence is weak-star basic (this is how the
corollary is obtained), so its vectors are linearly independent.  Therefore
$Z$ is infinite-dimensional.

But every $z_{n_k}$ belongs to $Y$, and $Y$ is closed, so
$Z\subset Y$.  Since $Y$ is hereditarily $\ell_1$, the infinite-dimensional
closed subspace $Z$ contains an isomorphic copy of $\ell_1$.  This is
impossible in a reflexive space.  The contradiction proves that no normalized
sequence contained in $Y$ is strongly normal.

The final sentence follows immediately: subsequences and normalized block
sequences of the basis $(y_n)$ are particular normalized sequences in $Y$.
\end{proof}

Theorem~\ref{thm:hereditary-no-strong} is stronger than the
subsequence and block obstructions established earlier: it is a property of
an entire closed subspace of $E^*$, not merely of a chosen basis or of its
block structure.  In particular, the normalized weakly null basis supplied
there has no strongly normal normalized subsequence or block sequence, and
indeed its whole closed linear span contains no strongly normal normalized
sequence.

\section{Concluding remarks}
The results above give three complementary descriptions of the same
phenomenon.  First, inside the class of Grothendieck Banach spaces,
reflexivity is characterized by the requirement that every normal sequence
in the dual admit a strongly normal subsequence.  Second, for an arbitrary
Banach space, Theorem~\ref{thm:criteria-general} identifies the local
mechanism exactly: a normal sequence has a strongly normal subsequence if and
only if some subsequence detects a separable quotient.  For Grothendieck
spaces this is equivalent to the existence of a subsequence with reflexive
norm-closed linear span.  Third, Theorem~\ref{thm:hereditary-no-strong}
shows that, in every nonreflexive Grothendieck space, the obstruction may be
concentrated in a closed hereditarily $\ell_1$ subspace of the dual, where no
normalized sequence is strongly normal.

The counterexamples of K\k{a}kol~\cite{Kakol2026} arise from a different,
combinatorial mechanism and contain refined information involving cardinal
invariants.  The interaction between those cardinal phenomena and the
quotient criterion of Section~\ref{sec:criteria} suggests a separate line of
investigation, which is not pursued here.

\section*{Declaration of generative AI and AI-assisted technologies in the manuscript preparation process}
During the preparation of this work, the author used OpenAI's ChatGPT to assist
with exploratory mathematical discussion, manuscript organization, language
refinement, consistency checks, and exposition.  After using this tool, the
author independently reviewed and verified all mathematical statements, proofs,
references, and conclusions, edited the content as needed, and takes full
responsibility for the content of the article.

\normalsize

\end{document}